\newcommand{\seq}{\subseteq}
\newtheorem{thm}{Theorem}[section]
\newtheorem*{thm-nl}{Theorem}
\newtheorem*{prop-nl}{Proposition}
\newtheorem{definition}[thm]{Definition}
\newtheorem{lem}[thm]{Lemma}
\def\PP{{\textbf P}}
\def\OO{\mathcal{O}}
\def\T{\mathcal{T}}
\def\cR{\mathcal{R}}
\def\Pic0{{\rm Pic}^0(X)}
\def\mm{\overline{\mathcal{M}}}
\newtheorem*{cor-nl}{Corollary}
\newtheorem*{conjecture-nl}{Conjecture}
\newtheorem*{quest-nl}{Question}
\newtheorem*{quests-nl}{Questions}
\newtheorem{prop}[thm]{Proposition}
\theoremstyle{remark}
\newtheorem{remark}[thm]{Remark}
\title[The Prym--Green Conjecture for torsion line bundles of high order]{The Prym--Green Conjecture for torsion line bundles of high order}
\author[G. Farkas]{Gavril Farkas}
\address{Humboldt-Universit\"at zu Berlin, Institut f\"ur Mathematik,  Unter den Linden 6
\hfill \newline\texttt{}
 \indent 10099 Berlin, Germany} \email{{\tt farkas@math.hu-berlin.de}}
\author[M. Kemeny]{Michael Kemeny}
\address{Humboldt-Universit\"at zu Berlin, Institut f\"ur Mathematik,  Unter den Linden 6
\hfill \newline\texttt{}
 \indent 10099 Berlin, Germany} \email{{\tt michael.kemeny@gmail.com}}
\begin{document}
\bibliographystyle{plain}

\begin{abstract}
Using a construction of Barth and Verra that realizes torsion bundles on sections of special $K3$ surfaces, we prove that the minimal resolution of a general paracanonical curve $C$ of odd genus $g$ and order $\ell\geq \sqrt{\frac{g+2}{2}}$ is natural, thus proving the Prym--Green Conjecture of \cite{CEFS} . In the process, we confirm  the expectation  of Barth--Verra concerning the number of curves with $\ell$-torsion line bundle in a linear system on a special $K3$ surface.
\end{abstract}

\maketitle
A \emph{paracanonical} curve of genus $g$ and order $\ell\geq 2$ is a smooth curve $C$
$$\phi_{K_C\otimes \eta}:C\hookrightarrow \PP^{g-2}$$
embedded by the paracanonical linear system $K_C\otimes \eta$, where $\eta \in \mbox{Pic}^0(C)$ is a torsion line bundle of order  $\ell$. Pairs $[C, \eta]$, which we often refer to as \emph{smooth level $\ell$ curves}, form an irreducible  moduli space $\cR_{g, \ell}$, whose birational geometry has been studied in \cite{CEFS} and \cite{CF}. For integers $p,q\geq 0$, we introduce the Koszul cohomology group of $p$-th syzygies of weight $q$
$$K_{p,q}(C, K_C\otimes \eta)=\mbox{Tor}^p_S\bigl(\Gamma_C(K_C\otimes \eta), \mathbb C\bigr)_{p+q},$$
where $S:=\mbox{Sym } H^0(C,K_C\otimes \eta)=\mathbb C[x_1, \ldots, x_{g-1}]$ is the polynomial algebra and
$$\Gamma_C(K_C\otimes \eta):=\bigoplus_{n\geq 0} H^0\Bigl(C,(K_C\otimes \eta)^{\otimes n}\Bigr)$$
is the homogeneous paracanonical coordinate ring, viewed as a graded $S$-module. The graded Betti numbers of the paracanonical curve $[C, \eta]$ are defined by $b_{p,q}:=\mbox{dim } K_{p,q}(C, K_C\otimes \eta)$.

\vskip 4pt

The main result of this paper is a proof of the Prym--Green Conjecture formulated in \cite{CEFS} for general paracanonical curves of odd genus $g$ of all but finitely many levels $\ell$.

\begin{thm}\label{pg}
Let $[C, \eta] \in \mathcal{R}_{g,\ell}$ be a general level $\ell$ curve of odd genus $g$, such that $\ell \geq \sqrt{\frac{g+2}{2}}$. Then the resolution of the associated paracanonical curve $C\subset \PP^{g-2}$ is natural, that is, the following vanishings of Koszul cohomology groups hold:
$$ K_{\frac{g-3}{2},1}(C, K_C \otimes \eta)=0 \; \; \text{and} \; \; K_{\frac{g-7}{2},2}(C, K_C \otimes \eta)=0$$
\end{thm}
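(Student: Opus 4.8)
The plan is to prove Theorem~\ref{pg} by a degeneration to a curve lying on a special $K3$ surface. Since the graded Betti numbers $b_{p,q}$ of a paracanonical curve are upper semicontinuous in families and $\cR_{g,\ell}$ is irreducible, it suffices to produce a single $[C,\eta]\in\overline{\cR_{g,\ell}}$ --- possibly a limit of smooth level $\ell$ curves --- for which the two Koszul groups vanish. I would moreover invoke Green's duality theorem, together with the automorphism $[C,\eta]\mapsto[C,\eta^{-1}]$ of $\cR_{g,\ell}$ and the projective normality of the general paracanonical curve, in order to reduce the problem to the single linear-strand vanishing $K_{(g-3)/2,1}(C,K_C\otimes\eta)=0$; the companion vanishing $K_{(g-7)/2,2}=0$ should then follow formally, or else be handled in parallel by the same method.

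For the example I would use a special $K3$ surface $X$ following Barth and Verra, containing a polarization $L$ with $L^2=2g-2$ and such that a suitable member $C\in|L|$ carries a line bundle $\eta$ of order exactly $\ell$. The inequality $\ell\geq\sqrt{(g+2)/2}$ should be exactly the condition guaranteeing, via the Barth--Verra enumeration of such curves in a linear system, that these curves exist (and, together with general position of $X$, that the ancillary positivity statements used below hold). Adjunction gives $K_C\cong L|_C$, so the paracanonical bundle extends to a line bundle $B\in\mathrm{Pic}(X)$ with $B|_C=K_C\otimes\eta$; setting $N:=B-L$ one computes $N\cdot C=0$, $N^2=N\cdot B=-4$, $B^2=2g-6$ and $h^0(X,B)=g-1$, and since $\eta=N|_C$ is non-trivial while $B$ is ample, neither $N$ nor $-N$ is effective, whence $h^0(X,N)=h^1(X,N)=0$. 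Thus $|B|$ embeds $X$ into $\PP^{g-2}$ with $C\subset X$ precisely our paracanonical curve, cut out on $X$ by a section of $\OO_X(L)$.

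To pass from $C$ to $X$ I would use the structure sequence $0\to\OO_X(N)\to\OO_X(B)\to\OO_C(B|_C)\to 0$ and its twists by powers of $B$. Using $h^0(X,N)=0$ (and the vanishing of $h^1$ of the relevant twists) this yields an exact sequence
\[
0\longrightarrow K_{p,1}(X,B)\longrightarrow K_{p,1}(C,B|_C)\longrightarrow K_{p-1,1}(X,N;B)\longrightarrow K_{p-1,2}(X,B),
\]
where $K_{p,q}(X,N;B)$ denotes the Koszul cohomology of the $B$-graded $S$-module $\bigoplus_m H^0(X,\OO_X(N+mB))$. Hence $K_{(g-3)/2,1}(C,K_C\otimes\eta)=0$ will follow once $K_{(g-3)/2,1}(X,B)=0$ and the multiplication map $K_{(g-5)/2,1}(X,N;B)\to K_{(g-5)/2,2}(X,B)$, induced by the section defining $C$, is injective. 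The first of these is Green's conjecture for the polarized $K3$ surface $(X,B)$, whose general hyperplane section has genus $g-2$: by Voisin's theorem it holds as soon as that general section has maximal Clifford index $(g-3)/2$, which is a lattice property of $X$ that I expect to verify using the hypothesis on $\ell$. The injectivity of the twisted multiplication map I would establish either by a duality on $X$, trading the twist by $N$ for the twist by $-N$ via $K_X\cong\OO_X$, or --- if that proves insufficient --- by degenerating $(X,B)$ further, for instance to a union of two rational surfaces glued along an elliptic curve, on which the relevant syzygy strand is combinatorially transparent and the class $N$ specializes in a controlled way.

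The hardest point, I expect, is this combination of Clifford-genericity with the twisted term: one must rule out that the special lattice of the Barth--Verra surface produces a low-degree pencil on the general hyperplane section (which would destroy $K_{(g-3)/2,1}(X,B)=0$) or extra syzygies carried by the non-effective class $N$, and one must keep any secondary degeneration simple enough to compute with while still carrying the torsion structure. Running the same machinery independently for $K_{(g-7)/2,2}=0$, should the duality reduction of the first paragraph not be available in the form required, is the remaining piece of work.
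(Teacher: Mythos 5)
Your architecture is the same as the paper's: put $[C,\eta_C]$ on a Barth--Verra $K3$ surface with $\mathrm{Pic}$ generated by $L,\eta$ ($L^2=2g-2$, $\eta^2=-4$, $L\cdot\eta=0$), extend the paracanonical system to $H=L+\eta$ (your $B$), and use Green's restriction sequence to reduce the two vanishings to Koszul vanishing for $(X,H)$ (via maximal Clifford index of the sections of $|H|$, i.e.\ Lazarsfeld's Brill--Noether--Petri theorem plus Voisin/Aprodu--Farkas) together with a statement about the twisted groups; your index bookkeeping is correct, and your $K_{p-1,1}(X,N;B)$ is exactly the paper's $K_{p-1,2}(X,-C,H)$, with injectivity a legitimate weakening of the vanishing the paper proves. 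But two essential inputs are assumed rather than proved. First, the existence on a general Barth--Verra surface of a \emph{smooth} curve $C\in|L|$ with $\eta_C$ of order \emph{exactly} $\ell$ is not a formal consequence of the Barth--Verra enumeration: their count ${2\ell^2-2\choose g}$ is conditional on a multiplicity-one (transversality) hypothesis they could not verify, it counts torsion of order only dividing $\ell$, and it says nothing about smoothness or irreducibility of the curves (for $\ell=2$ on Nikulin surfaces the count fails outright). Supplying this input is roughly half of the paper: the stable-sheaf construction of the zero-dimensional locus $Z$, the degeneration to the elliptic surface $Y_g$ with its Mordell--Weil analysis, the Kummer specialization giving transversality of $T_\ell$ and $\Gamma$, and the M\"obius count, culminating in Theorem \ref{generic-transversal} and the smoothness statement via Lemma \ref{discon}. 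Your proposal leaves all of this as an unexamined assumption.

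Second, the step you yourself flag as hardest --- injectivity (or vanishing) for the twisted term --- is where your sketch would most likely fail as written: Serre duality trading $N$ for $-N$ does not by itself yield the injectivity, and a degeneration of $(X,B)$ to a union of two rational surfaces glued along an elliptic curve is neither shown to carry the class $N$ nor to make the relevant strand computable. The paper's route (Proposition \ref{odd2}) is concrete: restrict $K_{i,2}(X,-C,H)$ and $K_{i-2,3}(X,-C,H)$ to a general $D\in|H|$, rewrite them as $H^0\bigl(D,\bigwedge^i M_{K_D}\otimes(2K_D-C_D)\bigr)$ and $H^0\bigl(D,\bigwedge^{i-1}M_{K_D}^{\vee}\otimes(C_D-K_D)\bigr)^{\vee}$, and then specialize $X$ within moduli to a \emph{hyperelliptic} $K3$ with a rank-three lattice containing $\Upsilon_g$, where $M_{K_D}$ splits into line bundles and both spaces of sections vanish by direct computation. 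Also, your duality reduction of the two vanishings to a single one is doubtful: Green duality for $K_C\otimes\eta$ produces Koszul groups with coefficients twisted by $\eta^{-1}$, not the paracanonical ring of $[C,\eta^{-1}]$, and indeed the paper proves $K_{\frac{g-3}{2},1}$ and $K_{\frac{g-7}{2},2}$ separately --- your fallback of running the argument in parallel is the route actually required. Two smaller points: the maximal Clifford index of $D\in|H|$ comes from the lattice (integrality of all divisors in $|L+c\eta|$) and has nothing to do with the hypothesis on $\ell$, and the case $g=5$ permitted by that hypothesis needs a separate ad hoc argument, which the paper handles by limit linear series.
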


The naturality of the resolution amounts to the vanishing $b_{p,2} \cdot b_{p+1,1}=0$ for all $p$, that is, the product of Betti numbers on each diagonal in the Betti diagram of the paracanonical curve vanishes. The Prym--Green Conjecture in odd genus and level $\ell=2$ (that is, for classical Prym-canonical curves) has been proved in \cite{FK}. Theorem \ref{pg} completely determines the minimal free resolution of the paracanonical ideal $I_{C}\subset S$ in genus $g=2i+5$, which has the following shape:


\begin{table}[htp!]
\begin{center}
\begin{tabular}{|c|c|c|c|c|c|c|c|c|}
\hline
$1$ & $2$ & $\ldots$ & $i-1$ & $i$ & $i+1$ & $i+2$  & $\ldots$ & $2i+2$\\
\hline
$b_{1,1}$ & $b_{2,1}$ & $\ldots$ & $b_{i-1,1}$ & $b_{i,1}$ & 0 & 0 &  $\ldots$ & 0 \\
\hline
$0$ &  $0$ & $\ldots$ & $0$ & $b_{i,2}$ & $b_{i+1,2}$ & $b_{i+2,2}$ & $\ldots$ & $b_{2i+2,2}$\\
\hline
\end{tabular}
\end{center}
    \label{tab:even}
\end{table}

where,
 $$b_{p,1}=\frac{p(2i-2p+1)}{2i+3}{2i+4\choose p+1} \ \  \mbox{ for } p\leq i,\  \mbox{ and } \ \  b_{p,2}=\frac{(p+1)(2p-2i+1)}{2i+3}{2i+4\choose p+2} \  \ \  \mbox{ for } p\geq i.$$

\vskip 5pt

Note that the resolution is \emph{natural}, but fails to be \emph{pure} in column $i$, for both Koszul cohomology groups $K_{i,1}(C,K_C\otimes \eta)$ and $K_{i,2}(C,K_C\otimes \eta)$ are non-zero. In this sense, the resolution of the general level $\ell$ paracanonical curve of \emph{odd} genus has the same shape as the resolution of the general canonical curve of \emph{even} genus, which verifies Green's Conjecture \cite{V1}.

\vskip 6pt

The method of proving Theorem \ref{pg}, can be employed to give a partial solution to the Prym--Green Conjecture on paracanonical curves of even genus as well. We write $g=2i+6$, and since it can easily be computed that $\mbox{dim } K_{i,2}(C,K_C\otimes \eta)=\mbox{dim } K_{i+1,1} (C,K_C\otimes \eta)$ for any $[C, \eta]\in \cR_{g,\ell}$, the Prym-Green conjecture from \cite{CEFS} amounts to
a single vanishing statement, namely
\begin{equation}K_{i,2}(C,K_C\otimes \eta)=0.
\end{equation}
Equivalently, the paracanonical curve $C\subset \PP^{g-2}$ verifies the Green-Lazarsfeld property $(N_i)$.

\vskip 4pt

The Prym--Green Conjecture is expected to fail for $\ell=2$ and $g=2^n\geq 8$. The reasons for this failure are not yet understood geometrically. In \cite{CEFS} the conjecture has been checked using Macaulay2 for levels $\ell\leq 5$ and $g\leq 18$, with the two already mentioned exceptions $\ell=2$ and $g\in \{8, 16\}$. One expects no further exceptions for levels $\ell\geq 3$, that is, the resolution of the general paracanonical curve should always be natural.
Here we prove a weaker version of the conjecture in even genus, subject to the same numerical restrictions concerning the level as those in Theorem \ref{pg}.

\begin{thm}\label{pgeven}
Let $g \geq 8$ be even and $\ell$ an integer level such that $\ell=2$ or $\ell\geq \sqrt{\frac{g+2}{2}}$. Then for a general level $\ell$ curve $[C, \eta]\in \cR_{g,\ell}$ the following vanishings hold:
$$K_{\frac{g}{2}-4,2}(C,K_C\otimes \eta)=0 \ \ \mbox{ and } \ \ K_{\frac{g}{2}-1,1}(C,K_C\otimes \eta)=0.$$
\end{thm}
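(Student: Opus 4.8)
Since $\cR_{g,\ell}$ is irreducible and the Betti numbers $b_{p,q}(C,K_C\otimes\eta)$ are upper semicontinuous in families, it suffices to exhibit one pair $[C,\eta]\in\cR_{g,\ell}$ for which the two groups vanish; writing $g=2i+6$ these are $K_{i-1,2}$ and $K_{i+2,1}$. The plan is to rerun the argument of Theorem \ref{pg} in this even numerical situation. One takes $[C,\eta]$ from the Barth--Verra construction: $C$ is a smooth curve in a linear system $|H|$ on a special $K3$ surface $X$ with $H^2=2g-2$, and $\eta=\mathcal L|_C$ for a class $\mathcal L\in\mathrm{Pic}(X)$ with $\mathcal L^2=-4$ and $\mathcal L\cdot H=0$. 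The Barth--Verra expectation confirmed earlier in the paper guarantees that for a suitable such pair $\eta$ has exact order $\ell$, so $[C,\eta]\in\cR_{g,\ell}$, and that $K_C\otimes\eta=(H+\mathcal L)|_C$ with $h^0(X,H+\mathcal L)=g-1$; thus $X$ lies in the same $\PP^{g-2}$ as the paracanonical curve $C$.

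The first step is to transfer the two vanishings from $C$ to the polarised $K3$ surface $(X,H+\mathcal L)$. Via the restriction exact sequence attached to $C\subset X$ (and its Green-dual), one relates $K_{i+2,1}(C,K_C\otimes\eta)$ and $K_{i-1,2}(C,K_C\otimes\eta)$ to $K_{i+2,1}(X,H+\mathcal L)$, $K_{i-1,2}(X,H+\mathcal L)$ plus error terms, which are Koszul cohomology groups of the difference module $\bigoplus_n H^0\bigl(X,(n-1)H+n\mathcal L\bigr)$, equivalently Koszul groups of $X$ with coefficients in the negative class $\mathcal L$; these are shown to vanish in the relevant degrees by cohomology computations on $X$, exploiting that $\mathcal L$ and $-\mathcal L$ play symmetric roles (together with the involution $\eta\mapsto\eta^{-1}$ of $\cR_{g,\ell}$). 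This reduces Theorem \ref{pgeven} to $K_{i+2,1}(X,H+\mathcal L)=0$ and $K_{i-1,2}(X,H+\mathcal L)=0$; by the self-duality of the minimal resolution of a $K3$ surface the latter is equivalent to $K_{i+3,1}(X,H+\mathcal L)=0$, which follows from the former by propagation along the linear strand, so everything reduces to property $(N_{i+1})$ for $(X,H+\mathcal L)$.

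For the second step, the general curve $C'\in|H+\mathcal L|$ has genus $g'=g-2$, and one shows $(X,H+\mathcal L)$ is Clifford general: no class in $\mathrm{Pic}(X)$ computes a Clifford index for $C'$ below the generic value $\lfloor\tfrac{g'-1}{2}\rfloor=i+1$. This is exactly where the hypothesis $\ell\geq\sqrt{\tfrac{g+2}{2}}$ is used, since for smaller $\ell$ the Picard lattice of the Barth--Verra surface contains a destabilising class; the case $\ell=2$ (Nikulin surfaces) is handled separately, Clifford generality being checked by hand from the Nikulin lattice for all even $g\geq 8$, which is consistent with the fact that even in that case the conclusion is only the present partial one (compatibly with the known failures of Prym--Green at $g\in\{8,16\}$). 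Feeding Clifford generality into the results on syzygies of $K3$ sections (Voisin, and Aprodu--Farkas for arbitrary $K3$ surfaces) yields $K_{p,1}(X,H+\mathcal L)=0$ for $p\geq g'-1-(i+1)=i+2$, as needed. Only the weaker vanishing $K_{i+2,1}$, rather than the optimal $K_{i+1,1}$, is reached because $g'=g-2$ is even, so the canonical resolution of $C'$ is not pure in the middle and its linear strand runs to $p=i+1$; in the odd-genus case of Theorem \ref{pg} the corresponding $C'$ has odd genus and a pure resolution, and no step is lost.

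The hard part is the lattice-theoretic analysis of the second step: one must exclude \emph{every} candidate destabilising class in the (higher-rank) Picard lattice of the Barth--Verra $K3$ and extract the sharp bound $\ell\geq\sqrt{\tfrac{g+2}{2}}$, which presupposes a precise description of $\mathrm{Pic}(X)$ --- this is one place the confirmed Barth--Verra count is needed. A more technical difficulty is the transfer in the first step: unlike in Voisin's setting the paracanonical curve $C$ is not a hyperplane section of $X$ in the embedding by $H+\mathcal L$, so one cannot identify the two resolutions and must genuinely control the error terms carried by the negative class $\mathcal L$.
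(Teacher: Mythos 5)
Your skeleton coincides with the paper's: take a smooth $C\in|L|$ on a Barth--Verra surface with $\mathrm{ord}(\eta_C)=\ell$, polarize by $H=L+\eta$, use Green's exact sequence for $C\subset X$ (legitimized by Lemma \ref{needed-for-green}), and kill the surface groups $K_{i+2,1}(X,H)$ and $K_{i-1,2}(X,H)$ via Clifford generality of the general $D\in|H|$ and the Voisin/Aprodu--Farkas theorem. The genuine gap is in your ``first step'': the error terms of Green's sequence are the groups $K_{i+1,2}(X,-C,H)$ and $K_{i-2,3}(X,-C,H)$, computed from the module $\bigoplus_q H^0\bigl(X,(q-1)L+q\eta\bigr)$, and your claim that they vanish ``by cohomology computations on $X$, exploiting that $\mathcal L$ and $-\mathcal L$ play symmetric roles (together with the involution $\eta\mapsto\eta^{-1}$ of $\cR_{g,\ell}$)'' is not an argument and would not succeed: there is no useful symmetry of that module under $\eta\mapsto-\eta$, and the involution of the moduli space says nothing about a fixed curve on a fixed surface. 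In the paper this is the even-genus analogue of Proposition \ref{odd2}: one restricts to a general $D\in|H|$, identifies the two groups with $H^0$ of twisted exterior powers of the kernel bundle $M_{K_D}$, computes that the slopes sit at or just below $g(D)-1$ -- so the vanishing is only ``expected'' and cannot follow from general nonsense -- and then verifies it by a further specialization to a hyperelliptic $K3$ surface with the enlarged lattice $\widehat{\Upsilon}_g$, where $M_{K_D}$ splits into line bundles and the sections can be counted by hand, as in \cite{FK}. You correctly flag this transfer as ``a more technical difficulty,'' but you supply no mechanism for it, and it is exactly the substantive content of the proof.

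You also mislocate the hypothesis $\ell\geq\sqrt{\frac{g+2}{2}}$. The Barth--Verra surface has the rank-two lattice $\Upsilon_g=\mathbb Z\cdot L\oplus\mathbb Z\cdot\eta$, which does not depend on $\ell$; there is no ``higher-rank'' Picard lattice, no $\ell$-dependent destabilising class to exclude, and Clifford generality of the general $D\in|H|$ holds for every $g$ by Lemma \ref{non-eff} together with Lazarsfeld's Brill--Noether--Petri theorem. The numerical condition $2\ell^2-2\geq g$ enters only through Theorem \ref{generic-transversal}: it makes the count ${2\ell^2-2\choose g}$, and hence the M\"obius-inverted number of curves with $\mathrm{ord}(\eta_C)=\ell$, strictly positive, i.e.\ it guarantees that a smooth curve $C\in|L|$ carrying a torsion bundle of exact order $\ell$ exists on the surface at all. (Two minor points: your duality-plus-propagation route from $K_{i+2,1}(X,H)=0$ to $K_{i-1,2}(X,H)=0$ is a harmless variant of quoting the $K_{p,2}$-vanishing directly, but the relevant property for $D$ of genus $2i+4$ is $(N_i)$, not $(N_{i+1})$; and the confirmed Barth--Verra count is needed to produce the point $[C,\eta_C]\in\cR_{g,\ell}$, not to describe $\mathrm{Pic}(X)$.)
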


\vskip 4pt

The conclusion of Theorem \ref{pgeven} can be reformulated in terms of the graded Betti table of a general level $\ell$ paracanonical curve $\phi_{K_C\otimes \eta}:C\hookrightarrow \PP^{2i+4}$, which has the following shape:

\begin{table}[htp!]
\begin{center}
\begin{tabular}{|c|c|c|c|c|c|c|c|c|c|}
\hline
$1$ & $2$ & $\ldots$ & $i-1$ & $i$ & $i+1$ & $i+2$ & $i+3$ & $\ldots$ & $2i+3$\\
\hline
$b_{1,1}$ & $b_{2,1}$ & $\ldots$ & $b_{i-1,1}$ & $b_{i,1}$ & ? & 0 & 0 &  $\ldots$ & 0 \\
\hline
$0$ &  $0$ & $\ldots$ & $0$ & ? & $b_{i+1,2}$ & $b_{i+2,2}$ & $b_{i+3,2}$ & $\ldots$ & $b_{2i+3,2}$\\
\hline
\end{tabular}
\end{center}
    \caption{The Betti table of a general paracanonical curve of genus $g=2i+6$}
    \label{tab:even}
\end{table}
where,
$$b_{p,1}={2i+5\choose p+1}\frac{p(i-p+1)}{i+2} \ \mbox{ for } \ p\leq i,\  \mbox{ and } \ b_{p,2}={2i+5\choose p+2}\frac{(p+1)(p-i)}{i+2}  \  \mbox{ for } \ p\geq i+1.$$
The question marks in Table \ref{tab:even} refer to the common value $b_{i+1,1}=b_{i,2}$, which is expected to be zero, at least when $\ell\geq 3$. To settle this last
case of the Prym-Green Conjecture and, in particular, explain the puzzling exceptions in level $2$, genuinely new ideas seem to be required.

\vskip 6pt

The proof of Theorems \ref{pg} and \ref{pgeven} in the case when $\ell$ is relatively high with respect to $g$ uses in an essential way special $K3$ surfaces and a beautiful idea of Barth--Verra \cite{BV}. Contrary to the approach via Nikulin surfaces employed in \cite{FK} when $\ell=2$, in this paper we take the opposite approach of fixing the genus and instead letting the level grow high with respect to the genus.

\vskip 6pt

In what follows, we describe the Barth--Verra construction. We fix a smooth $K3$ surface $X_g$ such that  $\mbox{Pic}(X_g)\supseteq \mathbb Z\cdot L\oplus \mathbb Z\cdot H$, where
$$L^2=2g-2, \ \ L\cdot H=2g-2, \ \ H^2=2g-6,$$
then set $\eta:=H-L\in \mbox{Pic}(X_g)$.  For each curve $C\in |L|$, let $\eta_C:=\eta\otimes \OO_C\in \mbox{Pic}^0(C)$ be the restriction. Since $\mbox{dim } |L|=g$ and $\mbox{Pic}^0(C)$ has also dimension $g$, one expects the set
$$\mathcal{T}_{\ell}:=\Bigl\{C\in |L|: \eta_C^{\otimes \ell}\cong \OO_C\Bigr\}$$
to be finite for each $\ell\geq 2$. Using a Chern class computation, it is shown in \cite{BV} that $\T_{\ell}$ is indeed finite and
\begin{equation}\label{expect1}
|\mathcal{T}_{\ell}|={2\ell^2-2 \choose g},
\end{equation}
provided the following two assumptions are satisfied:

\vskip 4pt

(i) all curves $C\in |L|$ are irreducible, \ and

(ii) each curve $C\in \T_{\ell}$ contributes with multiplicity one.

\vskip 4pt

Condition (ii) is to be understood with respect to the scheme structure on $\mathcal{T}_{\ell}$ as a subvariety of a certain Segre variety defined in \S 3 of \cite{BV}. Whereas assumption (i) is clearly satisfied when for instance $\mbox{Pic}(X_g)=\mathbb Z\cdot L\oplus \mathbb Z\cdot H$, assumption (ii) seems more delicate and a priori one knows nothing about the singularities of the curves $C\in \T_{\ell}$. On the one hand, the  formula (\ref{expect1}) counts the number of divisors $C\in |L|$ such that $\eta_{C} $ has order \emph{dividing} $\ell$, since for composite  $\ell$ torsion points of orders dividing $\ell$ also  contribute. On the other hand, already for $\ell=2$, when $X_g$ is a polarized Nikulin surface of genus $g$ in the sense of \cite{vGS} or \cite{FV}, clearly $\T_{2}=|L|$ is $g$-dimensional, although the expected number computed by formula (\ref{expect1}) equals zero, as soon as $g\geq 7$. One of the main results of this paper is to show that the conjecture (ii) of Barth--Verra does indeed hold, when $X_g$ is general in moduli, and all curves $C\in \T_{\ell}$ are smooth. Rather than following the suggestion from M. Green's letter in \S 8 of \cite{BV} and use Griffiths' infinitesimal invariant for normal functions,  we instead deal with the transversality issues by explicit degenerations to elliptic $K3$ surfaces. We explain our results.

\vskip 4pt
\begin{definition}\label{bv2}
We fix $g\geq 3$ and let $\Upsilon_g$ be the rank two lattice generated by elements $L, \eta$ with $$L^2=2g-2, \; \eta^2=-4, \; L \cdot \eta=0.$$  A smooth $K3$ surface $X_g$ with $\mathrm{Pic}(X_g) \cong \Upsilon_g$ and with  $L$ big and nef is said to be a polarized Barth--Verra  surface of genus $g$.
\end{definition}

Observe that in Definition \ref{bv2}, the condition that $L$ be big and nef only amounts to choosing a sign for $L$ and does not impose additional conditions on the lattice.
Key in our proof of conjecture (ii) from \cite{BV} is a specialization of $X_g$ to an elliptic surface. We consider the lattice $\Omega_g$ with ordered basis $\{ E, \Gamma, \eta \}$ and having intersection numbers
$$E^2=0, \ \ \Gamma^2=-2, \ \  \eta^2=-4, \ \ E\cdot \Gamma=1, \ \ E\cdot \eta=\Gamma\cdot \eta=0.$$
Let $Y_g$ denote a general $\Omega_g$-polarized K3 surface; as we have a primitive embedding
$$\Upsilon_g \hookrightarrow \Omega_g, \ \  \ L \mapsto \Gamma+gE, \  \ \eta \mapsto \eta,$$ we may deform $X_g$ to $Y_g$.
By studying the Mordell--Weil group $MW(Y_g)$ of the elliptic fibration $Y_g \to \PP^1$ induced by the pencil $|E|$, we show:
\begin{thm}\label{genus-one-case}
For each $\ell \geq 2$, there exist finitely many elliptic fibres $F \in |E|$, such that $\eta_F\in \mathrm{Pic}^0(F)[\ell]-\{\mathcal{O}_F\}$.
\end{thm}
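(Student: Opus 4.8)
The plan is to identify the map $F\mapsto\eta_F$ with a Mordell--Weil section of the elliptic fibration $\pi\colon Y_g\to\PP^1$ induced by $|E|$, and then to use the elementary fact that two distinct sections of $\pi$ are distinct irreducible curves on the surface $Y_g$, hence meet in only finitely many points. First I would pin down $MW(Y_g)$. Since $Y_g$ is general, $\mathrm{NS}(Y_g)=\Omega_g$; the class $E$ is primitive and isotropic and, after the choice of sign making it nef, $|E|$ is base-point free and induces $\pi$, while $\Gamma$ (with $\Gamma\cdot E=1$) is a section, which we take as the zero section $O$. The fibration $\pi$ has no reducible fibres: a component of one would be a smooth rational curve $C$ with $C^2=-2$ and $C\cdot E=0$, but writing $C\equiv aE+b\Gamma+c\eta$ the relation $C\cdot E=b=0$ forces $C^2=-4c^2\leq 0$, never $-2$. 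Hence the trivial lattice is $T=\langle E,\Gamma\rangle$, and the Shioda--Tate isomorphism gives $MW(Y_g)\cong\mathrm{NS}(Y_g)/T=\Omega_g/\langle E,\Gamma\rangle\cong\mathbb{Z}$, torsion-free and generated by the image of $\eta$.

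Next I would exhibit $\eta$ as this Mordell--Weil section. Since $\eta\cdot E=0$, for each $t\in\PP^1$ with $F_t$ smooth the line bundle $\eta_{F_t}=\eta\otimes\mathcal{O}_{F_t}$ has degree zero, hence lies in $\mathrm{Pic}^0(F_t)$, which we identify with $F_t$ via the origin $\Gamma\cap F_t$. Over the open set $U_0\subseteq\PP^1$ parametrizing smooth fibres, $t\mapsto\eta_{F_t}$ is a morphism to the relative Jacobian $\mathrm{Pic}^0(Y_g|_{U_0}/U_0)\cong Y_g|_{U_0}$, i.e. a rational section of $\pi$; being a rational map from the smooth curve $\PP^1$ to a projective variety, it extends to an honest section $\sigma_0\colon\PP^1\to Y_g$, which by construction represents the class $\eta\bmod T$, that is, a generator of $MW(Y_g)\cong\mathbb{Z}$. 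In particular $\sigma_0$ is non-torsion, and $\sigma_0(t)=\eta_{F_t}$ under the above identification.

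Finally, for $t$ with $F_t$ smooth one has $\eta_{F_t}\in\mathrm{Pic}^0(F_t)[\ell]$ precisely when $\ell\cdot\sigma_0(t)=\mathcal{O}_{F_t}$, i.e. when the section $[\ell]\sigma_0$ (the $\ell$-th multiple of $\sigma_0$ in $MW(Y_g)$, acting on fibres by $[\ell]\colon F_t\to F_t$) passes through $\Gamma\cap F_t$. As $MW(Y_g)$ is torsion-free and $\ell\geq 2$, the section $[\ell]\sigma_0$ is distinct from $\Gamma$; two distinct sections being distinct irreducible curves on $Y_g$, they meet in finitely many points, so only finitely many such $t$ occur. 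Since $\pi$ has only finitely many singular fibres anyway, there are only finitely many $F\in|E|$ with $\eta_F\in\mathrm{Pic}^0(F)[\ell]$, a fortiori only finitely many with $\eta_F\in\mathrm{Pic}^0(F)[\ell]-\{\mathcal{O}_F\}$.

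The main obstacle is the second step: one must make rigorous that $t\mapsto\eta_{F_t}$ genuinely spreads out to a Mordell--Weil section and, crucially, that this section is non-torsion --- this is exactly where the explicit lattice $\mathrm{NS}(Y_g)=\Omega_g$, the absence of reducible fibres, and the ensuing computation $MW(Y_g)\cong\mathbb{Z}$ enter; once this is in place, finiteness follows for free. As a consistency check with the expected count, the height pairing gives $\langle\sigma_0,\sigma_0\rangle=-\eta^2=4$, whence by Shioda's formula $([\ell]\sigma_0)\cdot\Gamma=\frac{1}{2}\bigl(\ell^2\langle\sigma_0,\sigma_0\rangle-2\chi(\mathcal{O}_{Y_g})\bigr)=2\ell^2-2$, exactly the number appearing inside the binomial in the Barth--Verra formula (\ref{expect1}); this suggests that for general $Y_g$ there are precisely $2\ell^2-2$ such fibres, each with multiplicity one, which is presumably the input for the degeneration argument recovering $|\mathcal{T}_{\ell}|={2\ell^2-2\choose g}$.
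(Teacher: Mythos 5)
Your route is essentially the paper's route: both arguments rest on showing the fibration has no reducible fibres (the same lattice computation $R=aE+b\Gamma+c\eta$, $b=0$, $R^2=-4c^2$), invoking Shioda--Tate to get $MW(Y_g)\cong \Omega_g/\langle E,\Gamma\rangle\cong\mathbb Z$ generated by the class of $\eta$, translating ``$\eta_F^{\otimes\ell}$ trivial'' into ``$F$ passes through the intersection of the $\ell$-fold section with the zero section $\Gamma$'', and concluding finiteness because two distinct sections meet in finitely many points. The only real difference is presentational: the paper works with the explicit divisor classes $T_m=2m^2E+\Gamma+m\eta$, checks directly that each $T_m$ is an integral $(-2)$-curve, and computes $T_\ell\cdot\Gamma=2\ell^2-2$; you instead spread out $t\mapsto\eta_{F_t}$ to an abstract Mordell--Weil section $\sigma_0$ and recover the same number $2\ell^2-2$ from the height pairing. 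That identification of $\sigma_0$ with the class of $\eta$ modulo the trivial lattice is indeed the content of the Shioda--Tate map, so the step you flag as the main obstacle is standard, though the paper's explicit classes make it cost-free.

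One point falls short of the statement as it is used in the paper. The theorem asserts the existence of (finitely many) fibres with $\eta_F\in\mathrm{Pic}^0(F)[\ell]-\{\OO_F\}$, and your argument only delivers ``at most finitely many'': you never produce such a fibre, nor do you rule out $\eta_F\cong\OO_F$ on the fibres you would find. The paper gets both for free from the intersection numbers: $T_\ell\cdot\Gamma=2\ell^2-2>0$ forces $T_\ell\cap\Gamma\neq\emptyset$, so some fibre satisfies $\eta_F^{\otimes\ell}\cong\OO_F$, while $T_1\cdot\Gamma=0$ means the generator section is disjoint from $\Gamma$, so $\eta_F$ is never trivial. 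Both facts are one line away from the height computation you relegated to a ``consistency check'' ($\sigma_0\cdot\Gamma=0$, $[\ell]\sigma_0\cdot\Gamma=2\ell^2-2$), so the gap is easily repaired, but as written it is not part of your proof. Relatedly, you discard the singular fibres as ``finitely many anyway''; this suffices for bare finiteness, but the paper needs the characterization to hold on \emph{every} fibre (all fibres integral, nodal, with $\Gamma$ avoiding the nodes, via the degeneration in Lemma \ref{elliptic-1}) in order to run the exact count and the later analysis of Proposition \ref{nonreduced-prop}, and also to know that the asserted section representative $\Gamma$ and the fibration structure exist on the general $\Omega_g$-polarized surface, which you assumed rather than proved.
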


Every divisor in the linear system $|\Gamma+gE|$ on the surface $Y_g$ decomposes as a sum $$\Gamma+E_1+\cdots+E_g,$$ for $E_i \in |E|$, where we allow repetitions. This enables us to use Theorem \ref{genus-one-case} in order to describe the space of  divisors $C \in |\Gamma+gE|$ on $Y_g$, such that $\eta_{C}$ is an $\ell$-torsion line bundle. Under a certain transversality assumption which is verified in Section \ref{transversality},  all these divisors end up being reduced, nodal with all nodes disconnecting, see Proposition \ref{nonreduced-prop}. Furthermore there are exactly ${2\ell^2-2 \choose g }$ divisors $C \in |\Gamma+gE|$ such that $\eta_{C}^{\otimes \ell}\cong \OO_C$, which correspond to choosing $g$ of the $2\ell^2-2$ points of intersections $\Gamma\cdot T_{\ell}$, where $T_{\ell}$ is the section  of $Y_g\rightarrow \PP^1$ corresponding to  $\ell$ times a generator of $MW(Y_g) \cong  \mathbb Z$ and $\Gamma \in MW(Y_g)$ is the neutral element. Via a further specialization to Kummer surfaces, we establish in Section \ref{transversality}, the transversality assumption necessary to complete the proof of the conjecture in \cite{BV}.

\vskip 3pt

Before stating the next result, we recall that $\mu(n)$ denotes the M\"obius function. We also use the convention $\displaystyle{a \choose b}=0$ if $b>a$.
\begin{thm} \label{generic-transversal}
For a general Barth-Verra  surface $X_g$ of genus $g \geq 3$,  there exist precisely $\displaystyle{ 2\ell^2-2 \choose g }$  curves $C \in |L|$ such that $\eta_{C}^{\otimes \ell}\cong \OO_C$. All such curves $C$ are smooth and irreducible. The number of curves $C$ such that $\eta_{C}$ has order exactly $\ell$ is strictly positive and given by the following formula:
\begin{equation*}
\sum_{d|\ell} \mu\Bigl(\frac{\ell}{d}\Bigr) {2d^2-2 \choose g}.
\end{equation*}
\end{thm}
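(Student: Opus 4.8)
\smallskip
\noindent\textit{Proof plan.}\quad The plan is to compare $\mathcal{T}_\ell(X_g)$ with the analogous locus on the elliptic $K3$ surface $Y_g$, where it is governed by the Mordell--Weil group, and then transport the conclusion back across the degeneration $X_g\rightsquigarrow Y_g$ by semicontinuity. Since $\Upsilon_g\hookrightarrow\Omega_g$ is primitive and both $L$ and $\eta$ extend, I would fix a smooth pointed curve $(B,0)$ and a family $\mathcal{X}\to B$ of $K3$ surfaces with $\mathcal{X}_0=Y_g$, with general fibre a general Barth--Verra surface $X_g$, carrying the classes $L,\eta$. Let $\mathcal{P}\to B$ be the associated $\PP^g$-bundle of relative linear systems $|L|$, and let $\mathcal{Z}\subseteq\mathcal{P}$ be the relative Barth--Verra locus obtained by relativising the construction of \S 3 of \cite{BV}, so that $\mathcal{Z}_b$ is $\mathcal{T}_\ell(\mathcal{X}_b)$ with its natural scheme structure.

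On $Y_g$: from $h^0(\mathcal{O}_{Y_g}(gE))=h^0(\mathcal{O}_{Y_g}(\Gamma+gE))=g+1$ (Riemann--Roch, plus the fact that $|gE|$ is pulled back from $\PP^1$), every divisor in $|L|=|\Gamma+gE|$ contains $\Gamma$, hence equals $\Gamma+E_1+\cdots+E_g$ with $E_i\in|E|$. For a \emph{comb} curve $C=\Gamma\cup E_1\cup\cdots\cup E_g$ with the $E_i$ distinct and meeting $\Gamma$ transversally at distinct points, $C$ is of compact type, $\mathrm{Pic}^0(C)\cong\prod_i\mathrm{Pic}^0(E_i)$ via restriction with $\eta_C\mapsto(\eta|_{E_i})_i$, so $\eta_C^{\otimes\ell}\cong\mathcal{O}_C$ exactly when each $E_i$ is one of the fibres of Theorem~\ref{genus-one-case}. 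As $\Omega_g$ has no $(-2)$-class orthogonal to $E$, the fibration $Y_g\to\PP^1$ has no reducible fibres, so $MW(Y_g)\cong\ZZ$ is generated by the section $T_1$ of class $2E+\Gamma+\eta$; identifying $\eta|_F$ with the point $T_1\cap F$ of $F$ (origin $\Gamma\cap F$), one sees $\eta_F^{\otimes\ell}\cong\mathcal{O}_F$ precisely when $F$ meets $\Gamma\cap T_\ell$, where $T_\ell=\ell\,T_1$ has class $2\ell^2E+\Gamma+\ell\eta$. Since $\Gamma\cdot T_\ell=2\ell^2-2$ and these are (for general $Y_g$) $2\ell^2-2$ distinct transverse points in distinct fibres, there are exactly $\binom{2\ell^2-2}{g}$ comb curves with $\eta_C^{\otimes\ell}\cong\mathcal{O}_C$. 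I would then invoke Proposition~\ref{nonreduced-prop}, which rests on the transversality proved in Section~\ref{transversality}: it guarantees that \emph{every} $C\in|L|$ on $Y_g$ with $\eta_C^{\otimes\ell}\cong\mathcal{O}_C$ is reduced, nodal with disconnecting nodes --- hence of comb type --- and that $\mathcal{Z}_{Y_g}$ is reduced. Thus $\mathcal{Z}_{Y_g}$ is reduced of length $\binom{2\ell^2-2}{g}$.

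Transfer: $\mathcal{Z}_{Y_g}$ being finite, $\mathcal{Z}\to B$ is finite near $0$ (upper semicontinuity of fibre dimension), so $\mathcal{T}_\ell(X_g)$ is finite for general $X_g$. For such $X_g$ assumption (i) of \cite{BV} holds --- the lattice $\Upsilon_g$ with $L$ ample has no effective decomposition $L=A+B$, so every member of $|L|$ is irreducible --- whence the Chern-class computation of \cite{BV} gives $\mathrm{length}\,\mathcal{Z}_b=\binom{2\ell^2-2}{g}$ for general $b$. As this equals $\mathrm{length}\,\mathcal{Z}_{Y_g}$, the finite morphism $\mathcal{Z}\to B$ is flat near $0$, and being flat with reduced special fibre it is \'etale there; hence $\mathcal{Z}_b$ is reduced --- exactly $\binom{2\ell^2-2}{g}$ distinct curves --- for general $b$, i.e.\ for a general Barth--Verra surface, which establishes the count and confirms conjecture (ii) of \cite{BV}. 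Irreducibility of these curves is the lattice statement just used; smoothness follows because the locus in $\mathcal{Z}$ over which the curve is singular is closed and proper, since under a general deformation $Y_g\rightsquigarrow X_g$ the class $[\Gamma]$ (equivalently $[E]$) ceases to be algebraic, forcing all $g$ nodes $\Gamma\cap E_i$ of a comb curve to be smoothed simultaneously --- so the general member of the \'etale, hence smooth, scheme $\mathcal{Z}$ corresponds to a smooth curve. Finally, the exact-order count is M\"obius inversion: $\mathcal{T}_\ell(X_g)=\bigsqcup_{d\mid\ell}\{C:\mathrm{ord}(\eta_C)=d\}$ and the first part applied with each $d\mid\ell$ gives $\#\{C:\eta_C^{\otimes d}\cong\mathcal{O}_C\}=\binom{2d^2-2}{g}$, hence $\#\{C:\mathrm{ord}(\eta_C)=\ell\}=\sum_{d\mid\ell}\mu(\ell/d)\binom{2d^2-2}{g}$; this is $\geq 0$ as a count of curves, and strictly positive whenever $g\leq 2\ell^2-2$ --- the range making the statement non-vacuous and containing the cases $\ell\geq\sqrt{(g+2)/2}$ of Theorems~\ref{pg},~\ref{pgeven} --- because on $Y_g$ one may choose a comb curve one of whose teeth is a fibre $F$ with $\eta|_F$ of order exactly $\ell$, and it deforms to a curve on $X_g$ with $\eta_C$ of order exactly $\ell$.

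The technical heart, and the main obstacle, is the transversality used on $Y_g$: that the Barth--Verra locus there is reduced and supported on comb curves is invisible to the elliptic fibration, and is the content of Proposition~\ref{nonreduced-prop}, proved in Section~\ref{transversality} by a further degeneration to Kummer surfaces. A secondary subtlety is matching the scheme-theoretic length on $Y_g$ with the Chern-class prediction on the general $X_g$ --- this is what rules out a component of $\mathcal{Z}$ supported over $0$ and makes the \'etaleness, hence the semicontinuity argument, work. The node-smoothing step, while morally transparent, rests on a transversality of the same flavour and is cleanest to phrase via the deformation theory of a comb curve inside $\mathcal{Z}$.
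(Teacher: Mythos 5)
Your architecture is essentially the paper's: degenerate $X_g$ to the elliptic surface $Y_g$, identify the torsion locus there with the $\binom{2\ell^2-2}{g}$ comb curves $\Gamma+E_1+\cdots+E_g$ through the points of $\Gamma\cap T_\ell$ (Mordell--Weil analysis plus the Kummer transversality of Section \ref{transversality}), play this off against the Barth--Verra Chern-class count, transfer to the general fibre by a relative/semicontinuity argument, and get the exact-order statement by M\"obius inversion. Two remarks on your transfer step. First, Proposition \ref{nonreduced-prop} does not assert that the scheme $\mathcal{Z}_{Y_g}$ is reduced: it asserts that the \emph{curves} parametrized by it are reduced and nodal, and counts the support. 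The scheme-reducedness you need for ``finite flat with reduced special fibre, hence \'etale'' must be extracted separately, by comparing the Barth--Verra length $\binom{2\ell^2-2}{g}$ on $Y_g$ (valid since $H^0(Y_g,L-\ell\eta)=0$) with the cardinality of the support, which forces every local multiplicity to equal one. With that said, your flatness/\'etaleness packaging is essentially equivalent to the paper's argument (support bounded below by specialization from $Y_g$, above by the length computation on $X_g$); note that on $X_g$ you should also verify assumption (2) of \cite{BV}, i.e.\ $H^0(X_g,L-\ell\eta)=0$, which follows from Lemma \ref{non-eff} together with $(L-\ell\eta)^2<-2$ in the range $2\ell^2-2\geq g$.

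The genuine gap is the smoothness step. Your stated reason --- that $[\Gamma]$ (equivalently $[E]$) ceases to be algebraic on $X_g$, ``forcing all $g$ nodes of a comb curve to be smoothed simultaneously'' --- only rules out deformations that remain \emph{reducible}; it does not rule out an integral member of $|L|$ on $X_g$ retaining, say, a single node, and an irreducible one-nodal curve of class $L$ is perfectly compatible with $\mathrm{Pic}(X_g)\cong\Upsilon_g$. The mechanism that actually closes this, isolated in the paper as Lemma \ref{discon}, is that all nodes of the comb are \emph{disconnecting}: the stable model of the comb therefore lies outside the boundary divisor $\Delta_0\subset\overline{\mathcal{M}}_g$, whereas any singular integral nodal curve of arithmetic genus $g$ has stable model inside $\Delta_0$; since $\Delta_0$ is closed and the moduli map of the family is continuous, the nearby integral members must be smooth. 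This is exactly why Proposition \ref{nonreduced-prop} requires $\Gamma\cap T_\ell\subseteq\Gamma\setminus\Gamma_{\mathrm{sing}}$ (so that the teeth $E_i$ are smooth fibres and every node disconnects), a hypothesis your sketch uses only implicitly. You do flag the node-smoothing step as delicate, but the justification you propose (non-algebraicity of $[\Gamma]$, or deformation theory of the comb inside $\mathcal{Z}$) would not suffice as written; the $\Delta_0$-avoidance argument of Lemma \ref{discon} is the missing ingredient.
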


\vskip 4pt

We  apply Theorem \ref{generic-transversal} in order to construct a point in the moduli space $\cR_{g, \ell}$, for which the Prym--Green conjecture from \cite{CEFS} can be shown to hold. We take a general Barth--Verra  surface $X_g$ with $\mbox{Pic}(X_g)=\mathbb Z\cdot L\oplus \mathbb Z\cdot \eta$ as above, then choose one of the (smooth) curves $C\in |L|$ such that $\eta_C$ has order $\ell$, thus $[C, \eta_C]\in \cR_{g, \ell}$.

\vskip 3pt

Following broadly the lines of \cite{FK}, we then check that for odd $g$,  the following  Koszul cohomology groups vanish:
$$ K_{\frac{g-3}{2},1}(C, K_C \otimes \eta)=0 \; \; \text{and} \; \; K_{\frac{g-7}{2},2}(C, K_C \otimes \eta)=0.$$

The case of curves of even genus $g$ is similar and we show in Section 4, that for a section $[C, \eta_C]\in \cR_{g, \ell}$  of a Barth--Verra surface as above, the following statements hold:
$$K_{\frac{g}{2}-4,2}(C,K_C\otimes \eta)=0 \ \ \mbox{ and } \ K_{\frac{g}{2}-1,1}(C,K_C\otimes \eta_C)=0.$$

\vskip 4pt

\noindent {\bf{Acknowledgements:}} We are grateful to  Remke Kloosterman for several helpful comments on a draft version of this paper. In particular, the simple proof of Lemma \ref{embedding-is-prim} is due to him. We thank Alessandro Verra for drawing, in a different context, our attention  to the beautiful paper \cite{BV}. We thank the two referees for several helpful corrections and improvements. In particular, a strengthening of the original version of Theorem \ref{generic-transversal} is due to a remark of the referee. This work was supported by the DFG Priority Program 1489 \emph{Algorithmische Methoden in Algebra, Geometrie und Zahlentheorie}.

\section{Moduli spaces of sheaves on K3 surfaces}
In this section we gather some results on moduli spaces of sheaves on $K3$ surfaces that we shall need and refer to  \cite{HL} for background. We shall use these moduli  spaces to study families of line bundles on (possibly reducible and/or non-reduced) divisors on $K3$ surfaces. If $X$ is a variety, $C\subseteq X$ and $A\in \mbox{Pic}(X)$ is a line bundle, we set $A_C:=A\otimes \OO_C$.

\begin{lem} \label{stability}
Let $(X,H)$ be a smooth polarized $K3$ surface and  $i:C \hookrightarrow X$ a curve in the linear system $|H|$. Let $A \in \mathrm{Pic}(X)$ be a line bundle. Assume that for any effective subcurve $C' \subsetneq C$, we have
\begin{enumerate}
\item $A \cdot C'=A \cdot C$,
\item if $C'$ nontrivial with $C' \neq C$, then $(C' \cdot C)>(C')^2$.
\end{enumerate} Then $i_*(A_C)$ is a stable coherent sheaf on $X$ with Mukai vector $v=\Bigl(0, H, (A \cdot H)-(H)^2/2\Bigr)$.
\end{lem}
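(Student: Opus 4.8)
The plan is to verify two things in turn: first that the Mukai vector of $i_*(A_C)$ is as claimed, and second that the sheaf is stable with respect to $H$. The computation of the Mukai vector is the routine part. Since $C \in |H|$, the sheaf $i_*(A_C)$ is supported on $C$, so its rank (as a sheaf on $X$) is $0$ and the first component of $v$ vanishes. The second component is the fundamental class of the support, which is $[C] = H$. For the third component, one uses Riemann--Roch / the Grothendieck--Riemann--Roch formula for the closed embedding $i$: writing $\chi(X, i_*(A_C)) = \chi(C, A_C) = \deg(A_C) + 1 - p_a(C)$, and substituting $\deg(A_C) = A \cdot C = A \cdot H$ together with the adjunction formula $2 p_a(C) - 2 = C^2 = H^2$, one gets $\chi = (A\cdot H) - H^2/2$. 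Matching this against the definition $\chi = \langle v, (1,0,1)\rangle$ for a Mukai vector $v = (0, H, s)$ forces $s = (A \cdot H) - H^2/2$, as asserted.

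The substantive part is stability. A sheaf $F = i_*(A_C)$ of this Mukai shape is a \emph{pure} sheaf of dimension $1$ (it has no zero-dimensional subsheaves, because $A_C$ is a line bundle on the possibly non-reduced but Cohen--Macaulay curve $C$ — divisors on a smooth surface are always Cohen--Macaulay, hence $S_1$, so $\mathcal{O}_C$ and any line bundle on it is pure). For pure dimension-one sheaves, Gieseker stability is tested on saturated subsheaves $G \subsetneq F$: one needs the reduced Hilbert polynomial $p_G(n) < p_F(n)$ for $n \gg 0$, equivalently $\frac{\chi(G(n))}{(\mathrm{supp}\,G \cdot H)} < \frac{\chi(F(n))}{(C \cdot H)}$. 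Every saturated subsheaf of $i_*(A_C)$ is of the form $i'_*(A_{C'} \otimes \mathcal{I}_Z)$-type, but after saturating we may take it to be $i'_*(B)$ for a line bundle $B$ on an effective subcurve $C' \subseteq C$ with $B \hookrightarrow A_{C'}$, so that $\deg B \leq \deg(A_{C'}) = A \cdot C'$. The leading (slope) term of the reduced Hilbert polynomial is $\frac{1}{2}$ in both cases since the coefficient of $n$ is $(C'\cdot H)$ for the subsheaf and $(C \cdot H)$ for $F$, which cancel; so stability is governed by the constant term, i.e. by comparing $\frac{\chi(B)}{C' \cdot H}$ with $\frac{\chi(A_C)}{C \cdot H}$. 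Using $\chi(B) = \deg B + \chi(\mathcal{O}_{C'}) = \deg B + 1 - p_a(C')$ and $2p_a(C') - 2 = (C')^2 + (C' \cdot (C - C'))\cdot 0$... more precisely adjunction on $X$ gives $2 p_a(C') - 2 = C' \cdot (C' + K_X) = (C')^2$, one reduces the strict inequality to a purely numerical statement in $A \cdot C'$, $A \cdot C$, $(C')^2$, $C' \cdot C$, $C'\cdot H = C' \cdot C$ (as $H \equiv C$).

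At this point hypotheses (1) and (2) are exactly what is needed. Hypothesis (1), $A \cdot C' = A \cdot C$, makes the $\deg$-contributions match up after normalizing; what remains after clearing denominators is an inequality whose sign is controlled by $(C'\cdot C) - (C')^2 > 0$, which is precisely hypothesis (2). I would also separately dispose of the trivial cases: $C' = 0$ gives the zero subsheaf (not relevant), and a subsheaf supported on all of $C$ but of smaller degree is excluded because $A_C$ is a line bundle on $C$ and any subsheaf of full support agreeing generically must be all of $A_C$ after saturation — so the only competitors are the proper subcurves covered by (2). The main obstacle, and the step deserving the most care, is the bookkeeping for \emph{non-reduced} or \emph{reducible} $C$: one must make sure that "effective subcurve" is interpreted as a subscheme defined by a sub-line-bundle of $\mathcal{O}_C$ (equivalently, an effective divisor $C' \leq C$ on $X$), that the saturation of an arbitrary subsheaf is indeed of the stated form $i'_*(B)$, and that the adjunction computation of $p_a(C')$ is valid in this generality. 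Once the reduction to the numerical inequality is done cleanly, hypotheses (1)–(2) close the argument immediately.
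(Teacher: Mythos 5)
Your setup (purity via Cohen--Macaulayness, the Mukai vector computation, and testing Gieseker stability on saturated subsheaves by comparing constant terms of reduced Hilbert polynomials) matches the paper's strategy, but the key quantitative step has a genuine gap. You replace the saturated subsheaf by ``a line bundle $B$ on a subcurve $C'$ with $B\hookrightarrow A_{C'}$, hence $\deg B\leq A\cdot C'$,'' discarding the $\mathcal{I}_Z$-twist, and then claim hypotheses (1)--(2) close the inequality. They do not: with only $\chi(B)\leq \chi(A_{C'})=A\cdot C'-\tfrac{1}{2}(C')^2$ and $A\cdot C'=A\cdot C$ (say $A\cdot H=0$, the case actually used later), the required inequality $\frac{\chi(B)}{C'\cdot H}<\frac{\chi(A_C)}{C\cdot H}$ becomes $-\frac{(C')^2}{2\,C'\cdot C}<-\frac{1}{2}$, i.e.\ $(C')^2>C'\cdot C$ --- the \emph{reverse} of hypothesis (2). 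Indeed no bound of the form $\chi(B)\leq\chi(A_{C'})$ can ever work, since $A_{C'}$ is a \emph{quotient} of $A_C$ and quotients of a stable sheaf have strictly larger reduced Hilbert polynomial; the extreme case of your bound is exactly the destabilizing direction. The error is the belief that saturation inside $A_C$ removes the twist: the saturated subsheaves of $i_*(A_C)$ are the kernels of $A_C\to A_{D}$ for effective divisors $0\neq D\leq C$, i.e.\ (on the complementary curve $C'=C-D$) the sheaf $A\otimes\mathcal{O}_X(-D)|_{C'}$, of degree $A\cdot C'-D\cdot C'$. The cross term $D\cdot C'=C\cdot C'-(C')^2$, which hypothesis (2) makes strictly positive, is precisely what you threw away, and it is the entire content of the stability inequality.

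The paper keeps this term by doing the bookkeeping on the quotient side: for a subsheaf $B'\subseteq A_C$ one writes $0\to B'\otimes A_C^{\vee}\to\mathcal{O}_C\to\mathcal{O}_Z\to 0$, lets $C'$ be the divisorial part of $Z$, and bounds $\chi(B')\leq\chi(A_C)-\chi(A_{C'})$, so that $c_1(B')=C-C'$ and the needed inequality reduces to $\chi(A_{C'})\cdot C^2>\chi(A_C)\cdot(C'\cdot C)$, which follows from (1), (2) and ampleness of $H$ (this also handles the case where $Z$ is zero-dimensional, corresponding to your ``full support'' case, which your argument only treats correctly for reduced $C$). So your route is salvageable, but only after restoring the exact identification of the saturated subsheaf (or, equivalently, bounding its Euler characteristic through the quotient as the paper does); as written, the reduction to ``an inequality controlled by $(C'\cdot C)-(C')^2>0$'' has the sign backwards.
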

\begin{proof}
We set $N:=A_C$ and first show that $i_*N$ is \emph{pure}, i.e.\ the support of any non-trivial coherent subsheaf $B \seq i_*N$ is of pure dimension $1$. It is clearly enough to show that $\mathcal{O}_C$ is pure and for this it suffices to show that is has no embedded points, see \cite[Pg.\ 3]{HL}. Since $C$ is a divisor in a smooth surface, it is Cohen-Macaulay, so that $C$ contains no embedded points, see for instance  \cite{stacks}, Lemma 30.4.4.


\vskip 4pt

We now establish the stability of $i_*(N)$. Let $B \seq i_*N$ be a coherent proper sheaf. By adjunction, we may write $B=i_*(B')$ for a coherent subsheaf $B' \seq N$. We have an exact sequence
$$0 \longrightarrow B' \otimes N^{\vee} \longrightarrow  \mathcal{O}_C \longrightarrow  \mathcal{O}_{Z} \longrightarrow  0, $$
where $Z \seq C$ is a closed subscheme. Let $C'$ be the complement of the union of all zero-dimensional components of $Z$; this is a subdivisor of $C$ and $C' \neq C$ as $B'$ has one-dimensional support. We have
$c_1(i_* N_{Z})=c_1(i_* N_{{C'}})$, since a divisor is determined by its restriction to an open set whose complement has codimension two. From the exact sequence
$$0 \longrightarrow  A(-C') \longrightarrow  A \longrightarrow  i_* N_{{C'}} \longrightarrow  0, $$
we obtain $c_1(i_* N_{Z})=C'$ and likewise $c_1(i_*N)=C$. Thus $c_1(B)=C-C'$. From Hirzebruch--Riemann--Roch, for any coherent sheaf $E$ of rank $r(E)$ on a $K3$ surface, one can express its Hilbert polynomial with respect to $H$ as
$$ P(E,m)=\frac{r(E)^2}{2}m^2+m(H \cdot c_1(E))+\chi(E),$$
since $c_1(X)=0$. Thus in the case $r(E)=0$, the reduced Hilbert polynomial, \cite[Def.\ 1.2.3]{HL},  has the form
$p(E,m)=m+\frac{\chi(E)}{(H \cdot c_1(E))}$. Hence, to prove stability of $i_*N$, we need to show that
$$ \frac{\chi(B')}{(C-C') \cdot C} < \frac{\chi(N)}{C^2}. $$ If $C'$ is trivial, then $Z$ is non-empty and has zero-dimensional support, so
$$\chi(B')=\chi(N)-\chi(N_{Z})<\chi(N)$$ and the claim holds. Thus we may assume $0\neq C'\neq C$. We have $\chi(B')=\chi(N)-\chi(N_{Z}) \leq \chi(N)-\chi(N_{{C'}})$. Thus it suffices to show $$\chi(N_{{C'}})\cdot (C)^2 > \chi(N)\cdot (C' \cdot C).$$ We have
$2\chi(N_{{C'}})=2\chi(A)-2\chi(A(-C'))=2(A \cdot C')-(C')^2$ and likewise, we write that
$2\chi(N)=2(A \cdot C)-(C)^2.$ The claim now follows from the assumption $(A \cdot C')=(A \cdot C)$, as well as the inequalities $(C)^2 > (C' \cdot C)$, which follows from the ampleness of
the divisor $C \in |H|$ and the inequality $(C' \cdot C) > (C')^2$ respectively, which holds by assumption.
\end{proof}

\vskip 3pt

Let $X, A$ and $C$ as above and set $v:=\Bigl(0, H, (A \cdot H)-(H)^2/2\Bigr)$. The coarse moduli space $M_X(v)$ of semistable sheaves on $X$ with Mukai vector $v$ is known to be projective of dimension $v^2+2=(H)^2+2=:2g$. The open subset $M_X^s(v)$ consisting of stable sheaves is known to be smooth. For a curve $C\subset X$, we denote by $i:C\hookrightarrow X$ the inclusion map.

\begin{lem} \label{locus-via-sheaves}
Let $(X,H)$ be a smooth polarized  $K3$ surface and a line bundle $A \in \mathrm {Pic}(X)$ with $(A \cdot H)=0$. Assume that every curve $C \in |H|$ satisfies the assumptions of Lemma \ref{stability} with respect to $A$. Then the following closed set
$$Z:=\Bigl\{i_*(A_{C}) \cong i_* (\OO_C): C\in |H|\Bigr\} \seq M_X^s(v)$$ is the intersection of two closed subsets $Z_1$ and $Z_2$ of $M_X^s(v)$ both of dimension $g$. In particular, $Z$ has expected dimension zero.
\end{lem}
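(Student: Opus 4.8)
The plan is to realize $Z$ as the intersection of two naturally defined loci, each cut out by the vanishing of a section or, more conceptually, each parametrizing sheaves that ``come from'' one of the two line bundles $\OO_C$ and $A_C$. First I would define
$$Z_1:=\overline{\Bigl\{i_*(\OO_C):C\in |H|\Bigr\}}\seq M_X^s(v), \qquad Z_2:=\overline{\Bigl\{i_*(A_C):C\in |H|\Bigr\}}\seq M_X^s(v),$$
where the closures are taken to make these honest closed subsets (one should check first that the relevant sheaves are actually stable, which is exactly Lemma \ref{stability}, and that they lie in the stable locus, so that the set-theoretic description makes sense). The map $|H|\to M_X^s(v)$, $C\mapsto i_*(\OO_C)$ is injective since $C$ is recovered as the (scheme-theoretic) support of $i_*\OO_C$; hence $\dim Z_1=\dim |H|=g$, using $(H)^2=2g-2$ so that $\dim |H|=g$. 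Similarly $C\mapsto i_*(A_C)$ is injective, because from $i_*A_C$ we recover its support $C$ and then $A_C=(i_*A_C)$ viewed as a sheaf on $C$; thus $\dim Z_2=g$ as well. By construction $Z=Z_1\cap Z_2$ set-theoretically: a stable sheaf of the form $i_*(A_C)$ equals $i_*(\OO_{C'})$ for some $C'\in|H|$ if and only if $C=C'$ (supports agree) and $A_C\cong\OO_C$, which is precisely the defining condition of $Z$.

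The expected-dimension statement then follows from the standard dimension count on the moduli space: $M_X^s(v)$ is smooth of dimension $v^2+2=(H)^2+2=2g$ (recalled just before the lemma), so two closed subsets of dimension $g$ meet in expected dimension $g+g-2g=0$. I would phrase this as: the expected codimension of $Z_i$ in $M_X^s(v)$ is $g$, so the expected codimension of $Z_1\cap Z_2$ is $2g=\dim M_X^s(v)$, giving expected dimension $0$.

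The main obstacle is showing that $Z_1$ and $Z_2$ genuinely have dimension exactly $g$ and not less — i.e.\ that the two maps from $|H|$ are injective (or at least generically finite) onto their images — and, more subtly, making sure the set-theoretic identification $Z=Z_1\cap Z_2$ is the one we want, since a priori the closures $Z_1,Z_2$ could contain strictly semistable or otherwise degenerate sheaves outside $M_X^s(v)$ and one must confirm these contribute nothing new inside the stable locus. The injectivity of $C\mapsto i_*\OO_C$ and $C\mapsto i_*A_C$ is the key point: both reduce to the fact that a pushforward $i_*\F$ of a rank-one torsion-free sheaf on an integral-or-not curve $C\subset X$ remembers the subscheme $C$ as (a suitable thickening of) its support, which is immediate here because $C$ is Cohen--Macaulay of pure dimension one inside the smooth surface $X$; once $C$ is recovered, $\F$ itself is recovered as the sheaf $i^{*}$ applied appropriately. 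The assumption $(A\cdot H)=0$ is what guarantees $i_*(\OO_C)$ and $i_*(A_C)$ have the \emph{same} Mukai vector $v=(0,H,-(H)^2/2)$, so that both families land in the same moduli space; this is the only role that hypothesis plays here.
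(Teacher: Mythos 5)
Your proposal is correct and follows essentially the same route as the paper: the paper also defines $Z_1,Z_2$ as the loci of sheaves $i_*(\OO_C)$ and $i_*(A_C)$ (stable by Lemma \ref{stability}, with equal Mukai vector precisely because $(A\cdot H)=0$), notes both have dimension $g$ inside the $2g$-dimensional $M_X^s(v)$, and writes $Z=Z_1\cap Z_2$. Your extra care about injectivity via recovering $C$ as the support is a fine way to justify $\dim Z_i=g$, and the closure hedge is unnecessary: each $Z_i$ is the image of the projective space $|H|$ under the classifying morphism of a flat family of stable sheaves, hence already closed with no semistable degenerations to worry about.
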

\begin{proof}
Under our assumptions both sheaves $i_*(A_{C})$ and $i_*(\mathcal{O}_{C})$ are stable with the same Mukai vector $v$. Let $Z_1$ be the closed locus of $M_X(v)$ parametrizing points of the form $i_*(A_{C})$ and $Z_2$ the locus parametrizing points of the form $i_*(\OO_C)$. Both $Z_1$ and $Z_2$  are of codimension $g$ inside $M_X(v)$. Then $Z=Z(X,H,A):=Z_1\cap Z_2$.
\end{proof}

\begin{remark}\label{relative1}
Lemma \ref{locus-via-sheaves} will be used later in a relative setting. Precisely, if $(X_t, H_t, A_t)_{t\in T}$ is a flat family of objects as in Lemma \ref{locus-via-sheaves} over an irreducible base $T$ for which  there exists a point $0\in T$ such that the subvariety $Z(X_0,H_0,A_0)$ is $0$-dimensional, then since the construction of the varieties $Z_1$ and $Z_2$ can clearly be done in relative setting, it follows that $Z(X_t, H_t, A_t)$ is $0$-dimensional for a general $t\in T$.
\end{remark}

\section{The elliptic $K3$ surface $Y_g$}
In this section we study the elliptic $\Omega_g$-polarized K3 surface from the introduction. By definition, $Y_g$ is a general $K3$ surface with Picard group given by the rank three lattice $\Omega_g$ having an ordered basis $\{ E, \Gamma, \eta \}$ and with the following corresponding intersection matrix
$$
\left( \begin{array}{ccc}
0 & 1 & 0 \\
1 & -2 & 0 \\
0 & 0 & -4 \end{array} \right),$$
such that $\Gamma+gE$ is a big and nef class. Such $K3$ surfaces exist by the Torelli theorem and by results of Nikulin, see for instance \cite{dolgachev} or \cite{Mo}.

\begin{lem} \label{elliptic-1}
If $Y_g$ is a general $\Omega_g$-polarized $K3$ surface, then one may choose the basis $\{ E, \Gamma, \eta \}$ of $\mathrm{Pic}(Y_g)$ such that $E$, $\Gamma$ are represented by integral curves. Furthermore all fibres of $|E|$ are integral and nodal and $\Gamma$ avoids the nodes.
\end{lem}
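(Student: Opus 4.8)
The plan is to read everything off the lattice $\Omega_g$ together with the classical theory of linear systems on $K3$ surfaces, letting genericity enter only in order to exclude cuspidal fibres. First I would arrange that $E$ is nef. Possibly after replacing $E$ by $-E$ and $\Gamma$ by $-\Gamma$ we may assume $E$ is effective, for otherwise $-E$ is effective and $(-E)\cdot(\Gamma+gE)=-1<0$, contradicting that $\Gamma+gE$ is nef. Suppose now that some integral curve $C$ had $E\cdot C<0$. Then $C^2<0$, so $C$ is a $(-2)$-curve; writing $C=aE+b\Gamma+c\eta$ one gets $b=E\cdot C<0$, while $(\Gamma+gE)\cdot C=a+(g-2)b\geq 0$ forces $a\geq(g-2)|b|\geq 1$ (using $g\geq 3$), and $-2=C^2=2ab-2b^2-4c^2$ gives $ab=b^2+2c^2-1\geq 0$, which is impossible since $a>0$ and $b<0$. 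Hence $E$ is nef, and being primitive (because $E\cdot\Gamma=1$) with $E^2=0$, a classical result of Saint--Donat shows that $|E|$ is a base-point-free pencil whose associated morphism $\pi:Y_g\to\PP^1$ is an elliptic fibration with smooth general fibre of genus one; in particular $E$ is represented by an integral curve.

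Next I would observe that $\Omega_g$ contains no $(-2)$-class orthogonal to $E$, since such a class has the form $aE+c\eta$ and hence square $-4c^2\neq -2$. As every component of a reducible Kodaira fibre is a smooth rational curve meeting $E$ in zero, that is, a $(-2)$-curve orthogonal to $E$, it follows that every fibre of $\pi$ is irreducible; and there are no multiple fibres, because a multiple fibre $mF$ with $m\geq 2$ would satisfy $mF\sim E$, contradicting primitivity of $E$. Thus every fibre of $\pi$ is integral. For $\Gamma$, Riemann--Roch gives $h^0(\Gamma)+h^0(-\Gamma)\geq\chi(\OO_{Y_g}(\Gamma))=1$, and $-\Gamma$ is not effective (it would meet the nef class $E$ negatively), so $\Gamma$ is effective. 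Writing $\Gamma=C_0+kE$, where $C_0$ is the unique integral component with $C_0\cdot E>0$ — necessarily $C_0\cdot E=1$ with multiplicity one since $\Gamma\cdot E=1$, the remaining components being whole fibres — adjunction yields $C_0^2=\Gamma^2-2k=-2-2k\geq -2$, hence $k=0$. So $\Gamma=C_0$ is a smooth rational curve meeting every fibre in a single point, i.e. an integral section of $\pi$; and since $Y_g$ is smooth and $\Gamma$ is a section, $\pi$ is smooth along $\Gamma$, so $\Gamma$ meets each fibre at a smooth point and automatically avoids all fibre singularities.

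It remains to show that for general $Y_g$ every singular fibre is nodal. Since all fibres are integral of arithmetic genus one, the only possibilities are type $I_1$ (a node) or type $II$ (a cusp), and we must exclude the latter. The condition that $\pi$ have a cuspidal fibre is closed on the irreducible moduli space of $\Omega_g$-polarized $K3$ surfaces, so it suffices to exhibit a single example — not even necessarily of Picard rank three — whose fibration has only nodal fibres. For this I would pass to the Weierstrass model: having a section, $Y_g$ is defined by $y^2=x^3+A(t)x+B(t)$ in a $\PP^2$-bundle over $\PP^1$ with $A\in H^0(\OO(8))$ and $B\in H^0(\OO(12))$, and since $\pi$ has no reducible fibres this model is already smooth; the singular fibres lie over the zeros of $\Delta=4A^3+27B^2\in H^0(\OO(24))$, such a fibre being cuspidal exactly when $A$ and $B$ vanish simultaneously there. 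For a suitable choice of coefficients admitting the required extra Mordell--Weil section, $\Delta$ has $24$ distinct simple roots and $A,B$ share no common zero, so all $24$ singular fibres are of type $I_1$. The main obstacle is precisely this last point: one must verify that imposing the extra section does not conspire to force $A$ and $B$ to acquire a common zero — equivalently, that cuspidal fibres do not occur identically across the family — for which an explicit Weierstrass example (or a dimension count on the relevant parameter space) is needed. Everything preceding it — nefness of $E$, the elliptic fibration structure, the integrality of all fibres, and the fact that $\Gamma$ is an integral section avoiding the nodes — is forced by the lattice $\Omega_g$ and standard $K3$ theory, and uses no genericity beyond $\mathrm{Pic}(Y_g)=\Omega_g$.
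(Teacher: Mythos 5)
Most of your argument is correct and, in fact, sharper than the statement requires: the nefness of $E$, the integrality of all fibres (no $(-2)$-class in $\Omega_g$ is orthogonal to $E$), the fact that $\Gamma$ is an integral section, and the observation that a section automatically misses every fibre singularity all follow from $\mathrm{Pic}(Y_g)\cong\Omega_g$ alone, with no genericity. This cleanly isolates the genericity to a single issue, the exclusion of cuspidal fibres. But that is precisely the step you do not prove: your reduction requires exhibiting one member of the $\Omega_g$-polarized family whose fibration has only nodal singular fibres, and you concede that the required Weierstrass example (generic $A,B$ constrained to admit an extra Mordell--Weil section disjoint from the zero section, realizing $\eta=T_1-\Gamma-2E$, with $\Delta$ having $24$ simple roots, and with $\Omega_g$ embedding primitively so that the surface really is a point of the lattice-polarized moduli space $T_{\Omega_g}$) has not been constructed or verified. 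That existence statement is the actual content of the lemma, and it is exactly where the paper's proof does its work: it specializes to the explicit elliptic surface $y^2=x\bigl(x^2+a(t)x+b(t)\bigr)$ with $a,b$ general of degrees $4,8$, whose $16$ singular fibres are all nodal ($8$ of type $I_1$ and $8$ of type $I_2$), realizes $\eta$ not by a section but by the integral class $\frac{1}{2}\sum_{i=1}^{8}N_i$ built from the $I_2$-components avoiding the zero section, and checks the embedding $\Omega_g\hookrightarrow\mathrm{Pic}(S)$ is primitive before spreading out by openness. Note in particular that the auxiliary surface need not have irreducible fibres: nodality and node-avoidance are transferred by openness, and irreducibility of the fibres of the general $Y_g$ is then recovered by the same lattice computation you use.

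A second, smaller gap: you invoke closedness of the cuspidal locus on ``the irreducible moduli space'' of $\Omega_g$-polarized $K3$ surfaces, but irreducibility is not known a priori; the paper only uses that $T_{\Omega_g}$ has at most two components, interchanged by complex conjugation, and adds the remark that the conjugate of the good example is again good, so the open locus of surfaces with all fibres nodal and section off the nodes meets every component. Your argument needs either this conjugation step or a proof of connectedness (and, once the relative family is set up, an explicit appeal to the openness of ``all fibres at worst nodal'' over the base). So the overall verdict is: the lattice-theoretic half is correct and even streamlines the paper's presentation, but the genericity half --- the existence of a suitable degeneration compatible with the $\Omega_g$-polarization, plus the two-component subtlety --- is left as an acknowledged obstacle rather than proved.
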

\begin{proof}
Consider the elliptic K3 surface $S \to \PP^1$ described by the Weierstrass equation
$$ y^2=x(x^2+a(t)x+b(t)),$$
for general polynomials $a(t)$ respectively  $b(t)$, of degree $4$ respectively $8$; this has already been studied in \cite[\S 4]{vGS} and  \cite[\S 3]{HK}. The surface $S$ has two disjoint sections $\sigma$ respectively \ $\tau$ given by $x=z=0$ respectively \ $x=y=0$. Further $S$ has $16$ singular fibres, all nodal, $8$ of which are integral and $8$ of which have two components. Let $N_1, \ldots, N_8$ be the components of the reducible singular fibres avoiding $\sigma$, and let $F$ denote the class of a general fibre of $S \to \PP^1$. There is an integral class $$\widehat{N}= \frac{1}{2} \displaystyle \sum_{i=1}^8 N_i \in \text{Pic}(S).$$ We have a primitive embedding $\Omega_g \hookrightarrow \text{Pic}(Z)$ given by $E \mapsto F, \ \Gamma \mapsto \sigma$ and $\eta \mapsto \widehat{N}$.

\vskip 3pt

If $T_{\Omega_g}$ is the moduli space of $\Omega_g$-polarized $K3$ surfaces considered in \cite{dolgachev}, then it follows by degeneration to $S$ that there is a non-empty open subset $U \seq T_{\Omega_g}$, such that for all $[Y_g] \in U$, we have $\text{Pic}(Y_g) \cong \Omega_g$ and
we may choose the basis $\{ E, \Gamma, \eta \}$ such that $E$, $\Gamma$ are represented by integral effective divisors and furthermore all fibres of $|E|$ are nodal and $\Gamma$ avoids the nodes. Further, if $[Y_g] \in U$ and $\widehat{Y}_g$ denotes the \emph{complex conjugate} (that is, the complex surface obtain by changing the complex structure by sign), then clearly the same statement holds for $\widehat{Y}_g$. The moduli space $T_{\Omega_g}$ has at most two irreducible components, which, locally on the period domain, are interchanged by complex conjugation. Thus, if $Y_g$ is general,  we may choose the basis $\{ E, \Gamma, \eta \}$ such that $E$, $\Gamma$ are represented by integral effective divisors and furthermore all fibres of $|E|$ are nodal and $\Gamma$ avoids the nodes.

\vskip 4pt

It remains to show that we can also choose all fibres of $|E|$ to be irreducible. Suppose $F \in |E|$ is a reducible fibre. Then there is a smooth rational component $R \seq F$. Write $R=aE+b\Gamma+c\eta$ for integers $a,b,c$. We have $(R \cdot E)=0$ as $E$ is nef and $(E)^2=0$, which implies $b=0$ and then $(R)^2=-4c^2$, contradicting $(R)^2=-2$.
\end{proof}

\vskip 4pt

A general surface $Y_g$ as above is endowed with an elliptic fibration $Y_g \to \PP^1$ induced by $|E|$. Recall that the \emph{Mordell--Weil group} $MW(Y_g)$ is the abelian group consisting of the sections of  $Y_g \to \PP^1$. Addition is defined by translation in the fibres, with points on $\Gamma$ serving as origins. The section $\Gamma\in MW(Y_g)$ is the zero element of the Mordell--Weil group.
\vskip 3pt

\begin{lem}
The group $MW(Y_g)$ is infinite cyclic, with generator given by the section
$$ T_1:=2E+\Gamma+\eta. $$ For any  $m \in \mathbb{Z}$, the element $T_m:=mT_1 \in MW(Y_g)$ corresponds to the section $2m^2E+\Gamma+m\eta$.
\end{lem}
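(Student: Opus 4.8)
The plan is to read off the group $MW(Y_g)$ from the Shioda--Tate formula together with the intersection data of $\Omega_g$, and then match divisor classes to actual sections by hand. First I would use Lemma \ref{elliptic-1}: all fibres of the elliptic fibration $Y_g\to \PP^1$ induced by $|E|$ are integral, so the trivial lattice of this fibration --- generated by the zero section $\Gamma$ together with all components of reducible fibres --- is simply $T:=\mathbb Z E\oplus \mathbb Z\Gamma\seq \mathrm{Pic}(Y_g)$. Since $\mathrm{Pic}(Y_g)=\mathbb Z E\oplus\mathbb Z\Gamma\oplus\mathbb Z\eta$ with the stated intersection matrix, the Shioda--Tate isomorphism gives a group isomorphism $MW(Y_g)\cong \mathrm{Pic}(Y_g)/T$ sending a section $P$ to its class modulo $T$; the target is $\mathbb Z\cdot\overline\eta$, so $MW(Y_g)$ is infinite cyclic and torsion free, and a section generates precisely when its class reduces to $\pm\overline\eta$.

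Next I would determine which divisor classes can carry a section. A section is a smooth rational curve $S$ with $(S\cdot E)=1$, hence $(S)^2=-2$ by adjunction on the $K3$ surface $Y_g$. Writing $S=aE+b\Gamma+c\eta$, the identities $(S\cdot E)=b$ and $(S)^2=-2-4c^2+2a$ force $b=1$ and $a=2c^2$, so every section has class $2c^2E+\Gamma+c\eta$ for some $c\in\mathbb Z$, and this class is $\equiv c\,\overline\eta \pmod T$.

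I would then check directly that $T_1:=2E+\Gamma+\eta$ is represented by a section. One computes $(T_1)^2=-2$, $(T_1\cdot E)=1$, and $(T_1\cdot L)=g>0$ for the big and nef class $L=\Gamma+gE$; Riemann--Roch on the $K3$ surface shows that $T_1$ or $-T_1$ is effective, and the positive intersection with the nef class $L$ excludes $-T_1$. For irreducibility, decompose an effective representative: since $(T_1\cdot E)=1$, $E$ is nef and all fibres are integral, one gets $T_1=C_0+kE$ with $C_0$ irreducible, $(C_0\cdot E)=1$ and $k\geq 0$; then $(C_0)^2=(T_1)^2-2k=-2-2k\geq -2$ forces $k=0$, so $T_1=C_0$ is an irreducible $(-2)$-curve meeting every fibre once, i.e.\ a section. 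Under the isomorphism above $T_1$ maps to $\overline\eta$, a generator of $\mathrm{Pic}(Y_g)/T$, hence $T_1$ generates $MW(Y_g)$.

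Finally, for $m\in\mathbb Z$ the section $T_m:=mT_1$ has a class of the shape $2c^2E+\Gamma+c\eta$ by the second step, and because $P\mapsto [P]\bmod T$ is a group homomorphism its class reduces to $c\,\overline\eta=m\,\overline\eta$ modulo $T$; as $\mathrm{Pic}(Y_g)/T$ is torsion free this gives $c=m$, so $T_m=2m^2E+\Gamma+m\eta$ (the case $m=0$ recovering the zero section $\Gamma$). The only genuinely geometric input is the passage from this lattice bookkeeping to actual curves, and I expect the verification that $T_1$ is \emph{irreducible} (not merely effective) to be the crux; this is exactly where the integrality of the fibres from Lemma \ref{elliptic-1} is used, and everything else is formal lattice arithmetic together with the Shioda--Tate isomorphism.
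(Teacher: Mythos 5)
Your argument is correct, and it uses the same key input as the paper, namely the Shioda--Tate theorem \cite[Theorem 6.3]{SS} together with lattice arithmetic in $\Omega_g$; but the work is distributed differently. The paper verifies \emph{for every} nonzero $m$ that the class $2m^2E+\Gamma+m\eta$ is an integral smooth rational curve (effectivity from $(T_m)^2=-2$, $(T_m\cdot E)=1$; integrality by assuming a decomposition and producing a component $R$ with $(R)^2=-2$, $(R\cdot T_m)<0$, which the lattice forbids since $(R\cdot T_m)=2(m-c)^2\geq 0$), and only then reads off the group structure from Shioda--Tate. You instead lean harder on the Shioda--Tate isomorphism $MW(Y_g)\cong \mathrm{Pic}(Y_g)/(\mathbb Z E\oplus\mathbb Z\Gamma)\cong\mathbb Z\cdot\overline{\eta}$: you first classify all classes that a section can possibly have ($2c^2E+\Gamma+c\eta$), check geometrically only the single case $m=1$, and then pin down the class of $mT_1$ formally from the homomorphism property and torsion-freeness of the quotient. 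Your integrality argument for $T_1$ is also different in flavour: you decompose an effective representative into its horizontal and vertical parts, using the integrality of all fibres from Lemma \ref{elliptic-1} to write $T_1=C_0+kE$ and then force $k=0$ from $(C_0)^2\geq -2$, whereas the paper's component argument is purely lattice-theoretic and does not need fibre integrality at that point (it is of course available, and both proofs need it anyway to identify the trivial lattice as $\mathbb Z E\oplus\mathbb Z\Gamma$). The net effect is that your route is slightly more economical (one geometric verification instead of one per $m$), at the cost of invoking the explicit classification of section classes and the functoriality of the Shioda--Tate map; the paper's route re-proves integrality uniformly in $m$ and so never needs that classification.
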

\begin{proof}
This will follow immediately from \cite[Theorem  6.3]{SS}, once we show that $$T_m:=2m^2E+\Gamma+m\eta$$ is the class of a irreducible smooth rational curve for any nonzero integer $m$. We have $(T_m)^2=-2$ and $(T_m \cdot E)=1$, so $T_m$ is effective. Assume $T_m$ is not integral. Then there exists a component $R$ of $T_m$, with $(R \cdot T_m) <0$ and $(R)^2=-2$. Write $R=aE+b\Gamma+c\eta$ for integers $a,b,c$. As $E$ is nef, $(R \cdot E) \geq 0$ so $b \geq 0$. Further, $T_m-R$ is effective so $E \cdot (T_m-R) \geq 0$, which gives $b \in \{0, 1 \}$. If $b=0$, then $(R)^2=-4c^2$, so we must have $b=1$. Then $(R)^2=-2$ gives $a=2c^2$ and $(R \cdot T_m)=2(m-c)^2 \geq 0$, which is a contradiction.
\end{proof}

The following result is central to our analysis.

\begin{prop} \label{genus-one}
Let $Y_g$ be general $\Omega_g$-polarized $K3$ surface and $F \in |E|$ be a divisor. The restriction $\eta_{F}$ satisfies $\eta_F^{\otimes \ell} \cong \OO_F$ if and only if $F$ passes through $T_{\ell} \cap \Gamma$. For each $\ell \geq 2$, there are at most finitely many curves $F \in |E|$ such that $\eta_{F}$ is torsion of order $\ell$. If, in addition, the sections $T_m$ and $\Gamma$ meet transversally for all $m=1,\ldots, \ell$, then such curves $F$ exist.
\end{prop}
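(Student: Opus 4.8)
The plan is to reduce the whole statement to the group law on the fibres of the elliptic fibration $Y_g\to\PP^1$, using the known description $T_m=2m^2E+\Gamma+m\eta$ of the Mordell--Weil sections. The starting point is a restriction formula. For any $F\in|E|$ the line bundle $\OO_{Y_g}(E)|_F$ is trivial: since $F$ is a reduced fibre (integral, by Lemma~\ref{elliptic-1}), adjunction and $\omega_{Y_g}\cong\OO_{Y_g}$ give $\omega_F\cong\OO_{Y_g}(F)|_F\cong\OO_{Y_g}(E)|_F$, while $\omega_F\cong\OO_F$ because $F$ is an integral Gorenstein curve of arithmetic genus one (an elliptic curve if $F$ is smooth, an integral nodal genus-one curve otherwise, whose dualizing sheaf is still trivial). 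Restricting the linear equivalence $T_\ell\sim 2\ell^2E+\Gamma+\ell\eta$ to $F$, and writing $o:=\Gamma\cap F$, $q:=T_\ell\cap F$ --- each a single reduced point (as $E\cdot\Gamma=E\cdot T_\ell=1$) lying in the smooth locus $F^{\mathrm{reg}}$ (for $\Gamma$ by Lemma~\ref{elliptic-1}; for $T_\ell$ because $T_\ell\cdot F=1$ forbids it from meeting a node) --- one gets $\eta_F^{\otimes\ell}\cong\OO_F(q-o)$. Since the Abel map $F^{\mathrm{reg}}\to\mathrm{Pic}^0(F)$, $x\mapsto\OO_F(x-o)$, is an isomorphism of algebraic groups, it follows that $\eta_F^{\otimes\ell}\cong\OO_F$ precisely when $q=o$, i.e.\ precisely when $T_\ell$ and $\Gamma$ meet $F$ at the same point; this is exactly the condition that $F$ pass through $T_\ell\cap\Gamma$. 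This proves the first assertion.

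For finiteness: if $\eta_F$ has order exactly $\ell$ then $\eta_F^{\otimes\ell}\cong\OO_F$, so by the above $F$ passes through the finite set $T_\ell\cap\Gamma$, which is finite because $T_\ell$ and $\Gamma$ are distinct integral curves, so that $|T_\ell\cap\Gamma|\leq T_\ell\cdot\Gamma=2\ell^2-2$. As $|E|$ is a base-point-free pencil whose members are pairwise disjoint, exactly one member passes through any given point of $Y_g$, so at most $2\ell^2-2$ fibres $F$ can occur.

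Under the transversality hypothesis I would prove existence by Möbius inversion. For each divisor $m$ of $\ell$, the first part puts the fibres $F$ with $\eta_F^{\otimes m}\cong\OO_F$ in bijection with $T_m\cap\Gamma$ (such a fibre meets $\Gamma$ in a single point, and conversely the fibre through a point of $T_m\cap\Gamma$ qualifies), and transversality gives $|T_m\cap\Gamma|=T_m\cdot\Gamma=2m^2-2$. Writing $f(m)$ for the number of fibres with $\eta_F$ of order exactly $m$, this says $\sum_{d|m}f(d)=2m^2-2$ for every $m|\ell$, so Möbius inversion yields
$$f(\ell)=\sum_{d|\ell}\mu(\ell/d)\,(2d^2-2)=2\sum_{d|\ell}\mu(\ell/d)\,d^2=2\ell^2\prod_{p|\ell}\Bigl(1-\tfrac{1}{p^2}\Bigr)>0$$
for $\ell\geq 2$, where one uses $\sum_{d|\ell}\mu(\ell/d)=0$ for $\ell\geq 2$ and the standard evaluation of the Jordan totient. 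In particular $f(\ell)\geq 1$, so the desired fibres exist. (Alternatively, one can bypass the Möbius computation with the pigeonhole estimate $2\ell^2-2>\sum_{d|\ell,\,d<\ell}(2d^2-2)$, valid since $\sum_{e\geq 1}e^{-2}<2$, which exhibits a point of $T_\ell\cap\Gamma$ lying on no $T_d$ with $d|\ell$, $d<\ell$, hence a fibre through it with $\eta_F$ of order exactly $\ell$.)

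I do not anticipate a serious geometric obstacle here --- the genuinely hard transversality input is not needed in this proposition, being assumed as a hypothesis and established later by specialization to Kummer surfaces. The points that require care are making the group-law argument uniform over smooth and nodal fibres, which is precisely why Lemma~\ref{elliptic-1} and the fact that Mordell--Weil sections avoid the nodes of fibres are invoked, and, for existence, noticing that it is the transversality hypothesis which upgrades $|T_m\cap\Gamma|\leq 2m^2-2$ to the equality that makes Möbius inversion (or the pigeonhole estimate) bite; without it one controls only the sums $\sum_{d|m}f(d)$ and obtains no lower bound on $f(\ell)$.
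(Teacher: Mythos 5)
Your proof is correct and follows essentially the same route as the paper: triviality of $E_F$ plus the group structure on the genus-one fibres identifies the fibres with $\eta_F^{\otimes m}\cong \OO_F$ with the points of $T_m\cap\Gamma$, finiteness comes from $(T_\ell\cdot\Gamma)=2\ell^2-2$, and existence of a fibre of exact order $\ell$ follows from counting over divisors of $\ell$. The only cosmetic differences are that you invoke the Abel map where the paper uses the uniqueness of the section of $\OO_F(\Gamma_F)$, and you get positivity via M\"obius inversion (Jordan totient) where the paper uses the equivalent estimate $\sigma_2(n)<2n^2$; your parenthetical pigeonhole argument is exactly the paper's.
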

\begin{proof}
Every fibre $F$ of the fibration $Y_g\rightarrow \PP^1$ is at worst nodal with $\Gamma$ avoiding the nodes and $E_{F}\cong \OO_F$. Thus $\eta_{F}$ satisfies the condition $\eta^{\otimes \ell}_{F} \cong \mathcal{O}_F$ if and only if \ $T_{\ell| F}=\Gamma_{| F}$. As the line bundle $\OO_F(\Gamma_F)\in \mbox{Pic}^1(F)$ has a single section, this occurs if and only if $F$ passes through $T_{\ell} \cap \Gamma$. Since $(T_{\ell} \cdot \Gamma)=2\ell^2-2>0$ for $\ell \geq 2$, there exist finitely many curves $F \in |E|$ such that $\eta_F^{\otimes \ell}\cong \OO_F$. Since $T_1 \cap \Gamma = \emptyset$, the restriction $\eta_{F}$ is never trivial.

Assume now that $T_m$ and $\Gamma$ meet transversally for each $1 \leq m \leq \ell$. There exist $2m^2-2$ elliptic fibres $F$ such that $\eta_{F}$ is nontrivial and $\eta_{F}^{\otimes m} \cong \mathcal{O}_F$. We wish to prove that there exists a curve $F$ such that $\eta_{F}$ is a torsion line bundle of order precisely $\ell$. To that end, for $k\geq 0$, we introduce the arithmetic function \cite[16.7]{HW}
$$\sigma_k(n):= \sum_{d | n} d^k .$$
It suffices to prove that for $n\geq 2$
$$- (2n^2-2)+ \sum_{d | n} (2d^2-2)   < 2n^2-2 .$$
This is equivalent to
$\sigma_2(n) < 2n^2+\sigma_0(n)-2$. Since $\sigma_0(n) \geq 2$ for $n \geq 2$, it suffices to show $\sigma_2(n) < 2n^2$. Let $n= \prod_{i=1}^r p_i^{a_i} $
be the decomposition into distinct prime factors. We have
$$ \sigma_2(n)=\prod_{i=1}^r \frac{p_i^{2(a_i+1)}-1}{p_i^2-1},$$ and then the desired inequality follows from the obvious inequality
$$\frac{p_i^{2(a_i+1)}-1}{p_i^2-1} < 2p_i^{2a_i}. $$
\end{proof}

\begin{definition}\label{singsec}
Let $\pi : S \to \PP^1$ be an elliptic fibration with a section $s:\PP^1 \to S$ and set $\Gamma:=\mathrm{Im}(s)$.  We denote by $\Gamma_{\mathrm{sing}} \seq \Gamma $ the locus of points $t\in \PP^1$ such that $\pi^{-1}(t)$ is singular.
\end{definition}

For the next result, we recall that the M\"obius function $\mu(n)$ is defined by $\mu(p_1\cdots p_k)=(-1)^k$ if $p_1, \ldots, p_k$ are
mutually different primes and $\mu(n)=0$ if $n$ is not square free, see \cite[16.3]{HW}. We also fix integers $g\geq 3$ and $\ell \geq 2$ such that $\ell\geq \sqrt{\frac{g+2}{2}}$. Recall that a node $p$ of a connected nodal curve $C$ is said to be \emph{disconnecting} if the partial normalization of $C$ at $p$ is disconnected.

\begin{prop} \label{nonreduced-prop}
Let $Y_g$ be as in Lemma  \ref{elliptic-1}, set $L=gE+\Gamma$ and consider the Mukai vector
$v=(0,L, 1-g)$. Assume the sections $T_m$ and $\Gamma$ of $Y_g$ meet transversally for $ m=1, \ldots,\ell$. Then there is a $0$-dimensional closed subscheme $Z=Z(Y_g,L,\eta^{\otimes \ell}) \seq M_{Y_g}^s(v)$, nonempty if and only if $g \leq 2\ell^2-2$, classifying curves $C \in |L|$ with $i_*(\eta_C^{\otimes \ell}) \cong i_*(\mathcal{O}_{C})$. All curves $C\in Z$ are nodal and $\eta_{C}\neq \OO_C$.  Furthermore, the underlying set of $Z$ consists of ${ 2\ell^2-2 \choose g }$ points. The number of curves $C\in Z$ such that $\mathrm{ord}(\eta_{C})=\ell$ equals
\begin{equation} \label{nested-sum}
\sum_{d|\ell} \mu\Bigl(\frac{\ell}{d}\Bigr) {2d^2-2 \choose g}>0.
\end{equation}

If we furthermore assume that $\Gamma \cap T_{\ell} \seq \Gamma \setminus \Gamma_{\mathrm{sing}}$, then all nodes of all curves $C \in Z$ are disconnecting.
\end{prop}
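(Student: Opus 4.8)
The plan is to describe the locus of curves $C\in|L|$ with $i_*(\eta_C^{\otimes\ell})\cong i_*(\OO_C)$ completely explicitly and then read off every assertion. First I would set up $Z$. By the discussion in the introduction every $C\in|L|=|gE+\Gamma|$ decomposes as $C=\Gamma+F_1+\cdots+F_g$ with $F_i\in|E|$, and since all fibres of $|E|$ are integral (Lemma \ref{elliptic-1}) we may write $C=\Gamma+\sum_j m_jF_j$ with the $F_j$ pairwise distinct fibres and $\sum_j m_j=g$. Using $\Gamma^2=-2$, $\Gamma\cdot E=1$, $E^2=0$ one checks that for every effective subcurve $C'=\varepsilon\Gamma+\sum_j n_jF_j\subsetneq C$ (with $\varepsilon\in\{0,1\}$, $0\le n_j\le m_j$) one has $\eta^{\otimes\ell}\cdot C'=0=\eta^{\otimes\ell}\cdot C$, and whenever $0\neq C'\neq C$ also $(C'\cdot C)-(C')^2=\varepsilon g+(1-2\varepsilon)\sum_j n_j>0$. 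Hence Lemma \ref{stability} applies to every $C\in|L|$ with $A=\eta^{\otimes\ell}$ (note $\eta^{\otimes\ell}\cdot L=0$), and by Lemma \ref{locus-via-sheaves} the set $Z=Z(Y_g,L,\eta^{\otimes\ell})=Z_1\cap Z_2$ is a well-defined closed subscheme of $M_{Y_g}^s(v)$ with $v=(0,L,1-g)$, of expected dimension $0$. Since $i_*$ is fully faithful on coherent sheaves on a closed subscheme and recovers its support, a point of $Z$ is precisely a curve $C\in|L|$ with $\eta_C^{\otimes\ell}\cong\OO_C$.

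Next I would identify this locus set-theoretically. Restricting $\eta_C^{\otimes\ell}\cong\OO_C$ to a reduced component $F_j$ gives $\eta_{F_j}^{\otimes\ell}\cong\OO_{F_j}$, so by Proposition \ref{genus-one} the fibre $F_j$ passes through $T_\ell\cap\Gamma$; as $\Gamma$ meets each fibre once and $T_\ell\cap\Gamma$ consists of $2\ell^2-2$ distinct points by transversality, these lie on $2\ell^2-2$ distinct fibres $\mathcal F_1,\dots,\mathcal F_{2\ell^2-2}$. Moreover $C$ must be reduced: if some $F_j$ (necessarily one of the $\mathcal F_k$) appears with multiplicity $m_j\ge2$ then $2F_j\subseteq C$, so $\eta^{\otimes\ell}$ restricts trivially to the first infinitesimal neighbourhood of $F_j$; but the restriction of $\eta^{\otimes\ell}$ to fibres is the Mordell--Weil section $T_\ell$ of $Y_g\to\PP^1$, and triviality on $2F_j$ would force $T_\ell$ to be tangent to $\Gamma$ at $\pi(F_j)$, contradicting their transversality there. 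Conversely, if $C=\Gamma+\mathcal F_{k_1}+\cdots+\mathcal F_{k_g}$ with distinct $k_i$, the dual graph of the components of $C$ is a star, hence a tree, so $\mathrm{Pic}(C)\hookrightarrow\mathrm{Pic}(\Gamma)\times\prod_i\mathrm{Pic}(\mathcal F_{k_i})$; as $\eta^{\otimes\ell}$ restricts to $\OO$ on each component, $\eta_C^{\otimes\ell}\cong\OO_C$. Thus $Z$ consists set-theoretically of the $\binom{2\ell^2-2}{g}$ reduced curves obtained by choosing $g$ of the $\mathcal F_k$: it is nonempty exactly when $g\le2\ell^2-2$, it is $0$-dimensional because finite, and $\eta_C\not\cong\OO_C$ since $\eta_{\mathcal F_{k_1}}\not\cong\OO$ (as $T_1\cap\Gamma=\emptyset$). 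Every such $C$ is nodal, its only singular points being the transverse intersections $\Gamma\cap\mathcal F_{k_i}$ — ordinary nodes because $\Gamma$ avoids the fibre nodes (Lemma \ref{elliptic-1}) — together with the nodes of whichever $\mathcal F_{k_i}$ are singular fibres, distinct fibres being disjoint.

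For the M\"obius formula, note that for each $d\mid\ell$ the fibres $F$ with $\mathrm{ord}(\eta_F)\mid d$ are exactly those meeting $T_d\cap\Gamma$, which number $T_d\cdot\Gamma=2d^2-2$ by the transversality hypothesis for $m=d\le\ell$; the injectivity $\mathrm{Pic}(C)\hookrightarrow\prod_i\mathrm{Pic}(\mathcal F_{k_i})$ gives $\mathrm{ord}(\eta_C)=\operatorname{lcm}_i\mathrm{ord}(\eta_{\mathcal F_{k_i}})$, whence $\#\{C\in Z:\mathrm{ord}(\eta_C)\mid d\}=\binom{2d^2-2}{g}$, and M\"obius inversion over the divisors of $\ell$ yields $\#\{C\in Z:\mathrm{ord}(\eta_C)=\ell\}=\sum_{d\mid\ell}\mu(\ell/d)\binom{2d^2-2}{g}$, whose positivity follows from an elementary estimate on binomial coefficients in the spirit of the inequality $\sigma_2(n)<2n^2$ from the proof of Proposition \ref{genus-one}. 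Finally, under the extra hypothesis $\Gamma\cap T_\ell\subseteq\Gamma\setminus\Gamma_{\mathrm{sing}}$ every $\mathcal F_k$ is a smooth elliptic curve, so for $C=\Gamma+\mathcal F_{k_1}+\cdots+\mathcal F_{k_g}\in Z$ the only nodes are the $g$ points $\Gamma\cap\mathcal F_{k_i}$, and the partial normalisation of $C$ at any one of them detaches $\mathcal F_{k_i}$ from the rest; hence all nodes of all curves in $Z$ are disconnecting.

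The step I expect to be the main obstacle is the exclusion of non-reduced curves: one must make precise that triviality of $\eta^{\otimes\ell}$ on the thickened fibre $2\mathcal F_j$ is governed by the first jet of the Mordell--Weil section $T_\ell$, and is therefore obstructed precisely by the transversality of $T_\ell$ and $\Gamma$. The remaining ingredients — the intersection-number verification of the hypotheses of Lemma \ref{stability}, the tree-graph injectivity of Picard groups, and the M\"obius inversion — should be routine.
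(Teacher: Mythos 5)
Your proposal is correct in substance, but it proves the central counting step by a genuinely different route from the paper. The paper never rules out non-reduced members of $Z$ directly: it exhibits the ${2\ell^2-2 \choose g}$ reduced tree-like curves as a lower bound for the support of $Z$, then quotes the Chern-class degree computation of \cite{BV} (after verifying $H^0(Y_g, L-\ell\eta)=0$) for the matching upper bound, so that the reduced curves exhaust $Z$ when $g\leq 2\ell^2-2$; the case $g>2\ell^2-2$ is then disposed of by noting that a hypothetical $C\in Z$ would be non-reduced and yet would lie in $Z(Y_g,L,\eta^{\otimes m\ell})$ for every $m$, contradicting the reducedness of those loci for $m\gg 0$. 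You instead pin down $Z$ set-theoretically by hand: every fibre component must pass through $T_\ell\cap\Gamma$, multiplicity at least $2$ is excluded by a first-order obstruction tied to the transversality of $T_\ell$ and $\Gamma$, and the tree-like converse produces exactly the ${2\ell^2-2\choose g}$ reduced nodal curves. Your route makes this proposition independent of the enumerative input of \cite{BV} and renders the nonemptiness criterion, the nodality, and the case $g>2\ell^2-2$ immediate; the paper's route avoids all infinitesimal analysis at the price of relying on the degree count and the $\eta^{\otimes m\ell}$ trick.

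Two points in your write-up need tightening. The jet step you flag is indeed the crux, and it does work: from $\ell\eta=T_\ell-\Gamma-2\ell^2E$ and the triviality of $\OO_{Y_g}(E)|_{2F}$ (it is pulled back from the length-two subscheme of $\PP^1$ under $F$), the class of $\eta^{\otimes \ell}|_{2F}$ lies in $\ker\bigl(\mathrm{Pic}(2F)\to \mathrm{Pic}(F)\bigr)\cong H^1(F,\OO_F)\cong \C$, and a \v{C}ech computation at the point $p\in T_\ell\cap\Gamma\cap F$ (a smooth point of $F$, since sections avoid fibre nodes) identifies this class with the difference of the first derivatives of the two sections at $p$ times the nonzero class of the principal part $1/x$; transversality makes it nonzero, so no $C\in Z$ can contain $2F$. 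This should be written out rather than deferred, but it is not a false step. Second, your positivity of the M\"obius sum ``by an elementary binomial estimate'' is shakier than it looks: for small $g$ and large $\ell$ the naive comparison of ${2\ell^2-2\choose g}$ with the sum over proper divisors is not immediate. It is cleaner to argue as the paper does: under your standing transversality hypothesis, Proposition~\ref{genus-one} provides a fibre $F$ with $\eta_F$ of order exactly $\ell$, and any $C\in Z$ containing $F$ has $\mathrm{ord}(\eta_C)=\ell$, so the set counted by the sum is nonempty.
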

\begin{proof}
The line bundle $L\in \mbox{Pic}(Y_g)$ is ample for $g \geq 3$. Furthermore, any $C \in |L|$ has the form $C=\Gamma+E_1+\ldots+E_g$, for $E_i \in |E|$. For such a curve $C$, we denote by $i:C\hookrightarrow Y_g$ the inclusion.  If $\eta^{\otimes \ell}_{C}$ is trivial for $\ell \geq 1$, then certainly $\eta^{\otimes \ell}_{E_i}$ is trivial.

It follows from Proposition \ref{genus-one} that there are at most finitely many divisors $C \in |L|$ with $i_*(\eta_C^{\otimes \ell}) \cong i_*(\mathcal{O}_{C})$. For all such $C$, we have $\eta_{C}\neq \OO_C$. Furthermore, it is easily seen that the conditions of Lemma \ref{locus-via-sheaves} are satisfied for the $K3$ surface $Y_g$, the polarization $H:=L$ and the line bundle $A:=\eta^{\otimes \ell}$. To establish the first statement, we only need to check that $Z\neq \emptyset$ if and only if $g \leq 2\ell^2-2$.

\vskip 6pt

Let $C \in |L|$ be a \emph{reduced} divisor of the form $$C:=\Gamma+ \sum_{i=1}^g E_i,$$ with $E_i \in |E|$ such that $\eta_{E_i}$ is $\ell$-torsion; such divisors exist due to the assumption that $T_{\ell}$ and $\Gamma$ meet transversally and correspond to choosing curves $E_i\in |E|$ passing through one of the $2\ell^2-2$ points of intersections of $T_{\ell}$ and $\Gamma$. There are ${2\ell^2-2 \choose g}$ choices for $C$. The situation can be summarized pictorially in Figure \ref{fig:cover}.

\begin{figure}[h]
\centering
  \includegraphics[width=2.3in]{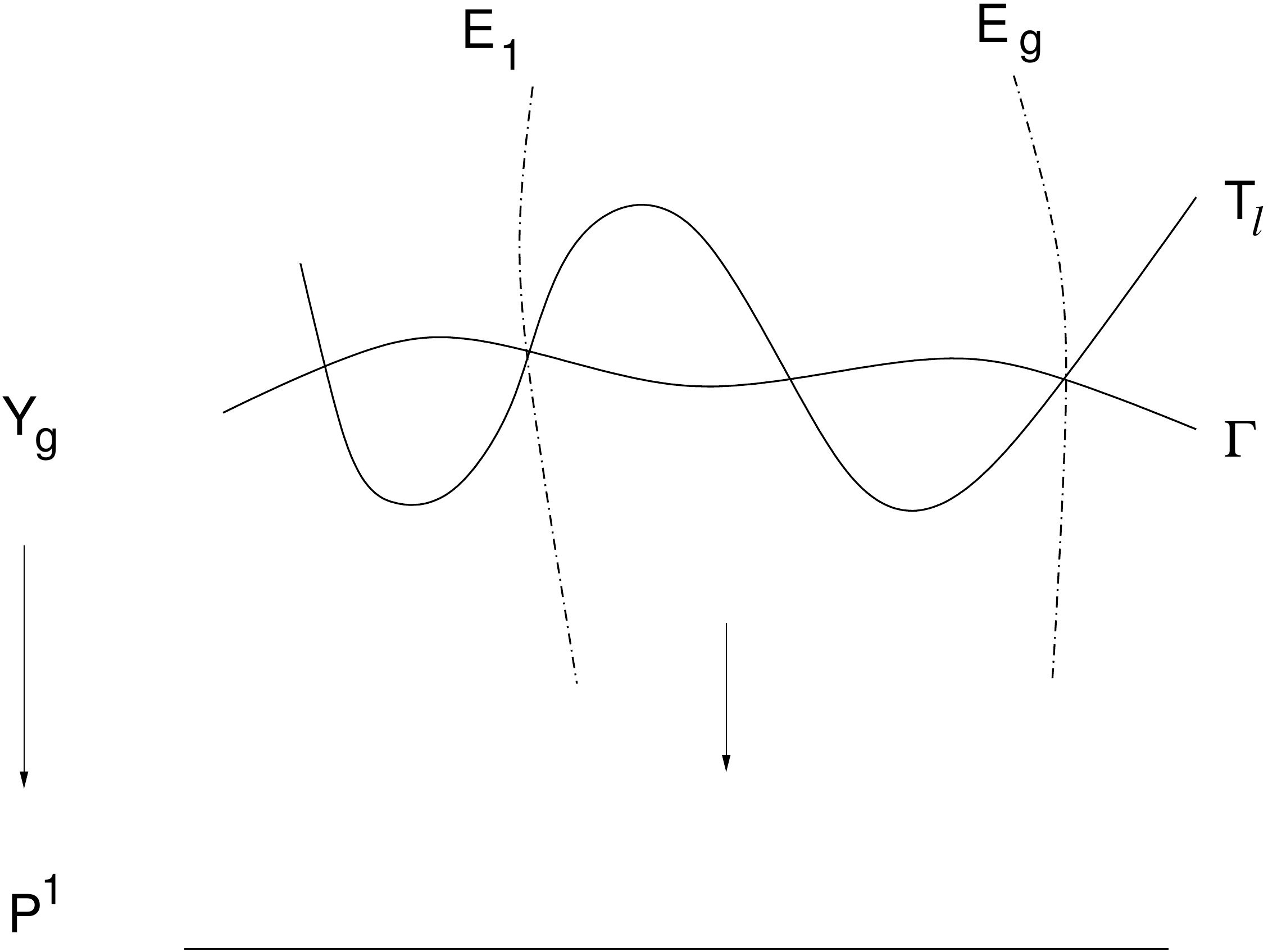}
  \caption{The elliptic surface $Y_g$}
  \label{fig:cover}
\end{figure}

\vskip 3pt

The curve $C$ being \emph{tree-like}, that is, its dual graph becomes a tree after removing self-edges, a line bundle on $C$ is determined by its restrictions to the irreducible components $\Gamma$, $E_1$, \ldots, $E_g$. We have that $\eta^{\otimes \ell}$ restricts to a trivial bundle on each component of $C$, hence $i_*(\eta^{\otimes \ell}) \cong i_*(\mathcal{O}_{C})$. Note also that each such $C$ is nodal. Since $\Gamma \cap T_{\ell} \seq \Gamma \setminus \Gamma_{\text{sing}}$,  each elliptic component of $C$ is smooth, so that each node of $C$ is disconnecting.

\vskip 4pt

We have thus shown that $Z$ is finite and contains at least ${2\ell^2-2 \choose g } $ points. The results of \cite{BV} show that the underlying set of $Z$, when finite, contains at most ${2\ell^2-2 \choose g}$ points, provided $g \leq 2\ell^2-2$. Note also that the assumption (2) on \cite[p.5]{BV}, namely  $H^0(Y_g, L\otimes \eta^{\otimes (-\ell)})=0$, needed in order to carry out the degree calculations in \emph{loc.cit.} holds. Indeed, otherwise there exists $M_1 \in |L-\ell \eta|$. Since $g \leq 2\ell^2-2$, one has $\dim |(2l^2-g)E + \Gamma| \geq 2$ so pick $M_2 \in  |(2\ell^2-g)E + \Gamma|$. Then $T_{-\ell}=M_1+M_2.$ As $T_{-\ell}$ is an integral $(-2)$ curve, this is not possible. We conclude that $Z$ consists of ${2\ell^2-2 \choose g } $ points, provided $g \leq 2\ell^2-2$.

\vskip 4pt
In the case, $g > 2\ell^2-2$, this analysis shows that $Z = \emptyset$. In fact, if $C \in Z$, then $C$ must be nonreduced. Furthermore, we have $C \in Z(Y_g, L , \eta^{\otimes m\ell})$ for any positive integer $m$. But, by the preceding paragraphs, all points in $Z(Y_g, L , \eta^{\otimes m\ell})$ correspond to reduced curves, for $m$ sufficiently large.
\vskip 4pt

For any nontrivial divisor $d|\ell$, there are precisely  ${2d^2-2 \choose g }$  curves $C\in Z$ satisfying  $\eta_C^{\otimes d} \cong \mathcal{O}_{C}$. Let $F(d)$ denote the number of elements $C\in Z$ such that $\mbox{ord}(\eta_C)=d$. We obtain the relation:
$${2\ell^2-2\choose g}=\sum_{d|\ell} F(d).$$
Equation \ref{nested-sum} is then a consequence of the M\"obius inversion formula \cite[16.4]{HW} applied to the arithmetic function $F$. To see that $F(\ell)>0$ for $2\ell^2-2 \geq g$, note that by Proposition  \ref{genus-one}, there exist reduced $C \in |L|$ of the form $C:=\Gamma+ \sum_{i=1}^g E_i$ with $\text{ord}(\eta_{E_1})=\ell$ and such that $\eta_{E_i}$ is $\ell$-torsion for all $i$. For such a $C$, one must have $\text{ord}(\eta_{C})=\ell$.
\end{proof}

\section{Transversality of sections of elliptic surfaces} \label{transversality}

We now verify that the transversality assumption of Proposition \ref{nonreduced-prop} is verified for a general $\Omega_g$-polarized $K3$ surface $Y_g$. This shows that the results of the previous section hold unconditionally, confirms the expectation of \cite{BV} and finishes the proof of Theorem  \ref{generic-transversal}.

\vskip 3pt

Our method of proof is to specialize $Y_g$ to a Kummer surface of the form $\text{Kum}(E \times E)$, for an elliptic curve $E$ with an origin $0\in E$. We denote by $\iota:E\rightarrow E$ the involution given by $\iota(p):=-p$. For $n\in \mathbb Z_{>0}$, we denote by $E[n]$ the set of $n$-torsion points of $E$. Consider the elliptic fibration
$$\text{Kum}(E \times E) \to \PP^1 \cong E / { \iota },$$
given by the projection onto the first factor; we think of $\text{Kum}(E \times E)$ as an elliptic surface with this fixed fibration.  We abuse notation and let $E \seq \text{Kum}(E \times E)$ denote the class of a smooth fibre. The elliptic fibration has a zero section $\Gamma \seq \text{Kum}(E \times E)$, given by the strict transform of the image of the curve
$$\widetilde{\Gamma}:=\bigl\{ (p, 0) \; | \; p \in E\bigr\} \seq E \times E$$ under the quotient map. For each integer $m\neq 0$, we denote by  $T_m \seq \text{Kum}(E \times E)$  the strict transform of the image of the curve $C_m:=\bigl\{ (p, 2m\cdot p) \; | \; p \in E\bigr\} \seq E \times E$.

\begin{lem}
Let $E$ be an elliptic curve and consider the Kummer surface $\mathrm{Kum}(E \times E)$. Then $T_m$ is a smooth section of $\mathrm{Kum}(E \times E)$, and the curves $\Gamma$ and $T_m$ meet transversally in $2m^2-2$ points. Furthermore, $T_m \cap \Gamma \seq \Gamma \setminus \Gamma_{\mathrm{sing}}$. We have the linear equivalence in $\mathrm{Pic}(\mathrm{Kum}(E\times E))$:
$$T_m \equiv 2m(m-1)E+(1-m)\Gamma+mT_1.$$
\end{lem}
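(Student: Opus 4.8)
The plan is to deduce all four assertions from the geometry of the sections upstairs on $A:=E\times E$, via the quotient map $A\to A/\iota$ and the minimal resolution $\rho\colon S:=\mathrm{Kum}(E\times E)\to A/\iota$. Recall that $A/\iota$ has exactly $16$ nodes, namely the images of the points of $A[2]$, each resolved by $\rho$ into a single $(-2)$-curve, that $\rho$ is an isomorphism over the smooth locus, and that the reducible fibres of $S\to\PP^1\cong E/\iota$ lie over the four points of $E[2]\subset E/\iota$, so that $\Gamma_{\mathrm{sing}}$ is exactly this four-element set.

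First I would analyse the curve $C_m=\{(p,2mp):p\in E\}\subseteq A$ itself. As the graph of the isogeny $[2m]\colon E\to E$, it is a smooth curve isomorphic to $E$, and it is $\iota$-invariant with $\iota|_{C_m}$ corresponding to the elliptic involution of $E$. The points of $A[2]$ lying on $C_m$ are precisely the four points $(e,0)$ with $e\in E[2]$, since $2me=0$; at each of these $C_m$ meets the surface node in a single smooth branch, of tangent direction $\{v=2mu\}$ in suitable local analytic coordinates $(u,v)$ in which $\iota$ acts by $(u,v)\mapsto(-u,-v)$. A direct computation — describing the image of such a branch in $A/\iota$ through the $\iota$-invariant function $u^2$ — shows that the image $\overline{C}_m\subseteq A/\iota$ is smooth, hence isomorphic to $E/\iota\cong\PP^1$. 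Its strict transform $T_m\subseteq S$ is therefore a smooth rational curve, and it is a section of $S\to\PP^1$ because over a general $t\in\PP^1$ the fibre meets $T_m$ transversally in the single point which is the image of $(t,2mt)$, as $C_m$ meets $\{t\}\times E$ transversally in one point on $A$.

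Next, for the intersection of $\Gamma$ with $T_m$ I would intersect $\widetilde\Gamma:=E\times\{0\}$ with $C_m$ on $A$. These distinct smooth curves meet transversally — their tangent lines $\{v=0\}$ and $\{v=2mu\}$ are distinct since $m\neq 0$ — in the $4m^2$ points $(p,0)$ with $p\in E[2m]$. Exactly four of these, those with $p\in E[2]$, are nodes of $A/\iota$, and since the two branches there have distinct tangent directions the strict transforms $\Gamma$ and $T_m$ are separated over each such node and contribute nothing. The remaining $4m^2-4$ points lie over $E\setminus E[2]$, are permuted freely by $\iota$ in pairs, and descend through the \'etale quotient and the isomorphism $\rho$ over the smooth locus to exactly $\tfrac{1}{2}(4m^2-4)=2m^2-2$ transverse intersection points of $\Gamma$ and $T_m$, all lying over points of $\PP^1$ with smooth fibre, that is, in $\Gamma\setminus\Gamma_{\mathrm{sing}}$. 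This yields the count $(\Gamma\cdot T_m)=2m^2-2$, the transversality, and the inclusion $T_m\cap\Gamma\subseteq\Gamma\setminus\Gamma_{\mathrm{sing}}$.

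Finally, for the linear equivalence I would use the Mordell--Weil lattice formalism (\cite{SS}) for the elliptic K3 surface $S\to\PP^1$ with zero section $\Gamma$. Restricting to a general smooth fibre and identifying it with $E$ via $(\{t\}\times E)/\iota\cong E$, the sections $\Gamma$, $T_1$, $T_m$ cut out the points $0$, $2t$, $2mt=m\cdot(2t)$, so that $T_m=m\,T_1$ in $MW(S)$. Moreover $\Gamma$ and $T_1$ both pass through the $(-2)$-curve over the node $(e,0)$ in the reducible fibre over each $[e]$, $e\in E[2]$, so $T_1$ — and hence also $T_m=m\,T_1$ — meets the identity component of every reducible fibre, so all the local correction terms for $T_1$ and $T_m$ vanish. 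Writing $\phi\colon MW(S)\to\mathrm{Pic}(S)\otimes\mathbb Q$ for Shioda's homomorphism and using $\chi(\mathcal O_S)=2$, $(\Gamma^2)=-2$, $(T_1\cdot\Gamma)=0$, and $(T_m\cdot\Gamma)=2m^2-2$ from the previous step, one computes $\phi(T_1)=T_1-\Gamma-2E$ and $\phi(T_m)=T_m-\Gamma-2m^2E$; additivity of $\phi$ gives $\phi(T_m)=m\,\phi(T_1)$, hence $T_m=2m(m-1)E+(1-m)\Gamma+m\,T_1$ in $\mathrm{Pic}(S)\otimes\mathbb Q$, and as all classes involved are integral and $\mathrm{Pic}(S)$ is torsion-free this equality holds in $\mathrm{Pic}(S)$. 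I expect the main difficulty to lie in the two local computations at the four nodes — that $\overline{C}_m$ is smooth there and that the strict transforms of $\widetilde\Gamma$ and $C_m$ genuinely separate — together with correctly identifying the identity component of each reducible fibre; the remaining steps are then formal.
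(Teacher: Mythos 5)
Your argument is correct, and it overlaps with the paper's proof in the central counting step: both intersect $\widetilde{\Gamma}=E\times\{0\}$ with $C_m$ upstairs, find $4m^2$ transverse points at $E[2m]\times\{0\}$, discard the four points lying over $A[2]$, and halve the rest to get $2m^2-2$ transverse points of $\Gamma\cap T_m$ away from $\Gamma_{\mathrm{sing}}$. Where you diverge is at the two ends. For smoothness of $T_m$ and the behaviour at the four surface nodes, the paper simply quotes Shioda's Proposition 3.1, whereas you do the local computation at the $A_1$ points yourself (invariant coordinate $u^2$, branches $\{v=0\}$ and $\{v=2mu\}$ separating on the exceptional $(-2)$-curve); this is more self-contained and also makes explicit why the four $2$-torsion points contribute nothing, which the paper only asserts. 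For the divisor class, the paper uses $T_m\equiv mT_1 \pmod{\Lambda}$ (Shioda, Sch\"utt--Shioda) and then determines the unknown class $y=aE+b\Gamma+D$ by undetermined coefficients, which requires the extra computation $(T_1\cdot T_m)=2(m-1)^2-2$ and the trick of evaluating $(T_m-mT_1)^2$ to force the fibre-component part $D$ to vanish; you instead apply Shioda's homomorphism $\phi$ with its explicit correction terms, observing that $\Gamma$, $T_1$ and $T_m$ all meet the exceptional $(-2)$-curve over $(e,0)$ in each $I_0^*$ fibre, so the corrections vanish and additivity of $\phi$ gives the formula directly, with integrality following from torsion-freeness of $\mathrm{Pic}$ of a $K3$. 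Your route trades the paper's intersection-theoretic bookkeeping for the need to identify the identity components of the four reducible fibres correctly (which you do; note also that $mT_1$ lies in the narrow Mordell--Weil group once $T_1$ does, so your parenthetical ``hence also $T_m$'' is legitimate). Two cosmetic points: the fibre over a general $[t]$ is the isomorphic image of $\{t\}\times E$ rather than a quotient by $\iota$, and the fact that $T_m=mT_1$ in $MW$ should be read off on the generic fibre (or on all general fibres), exactly as you indicate; neither affects correctness.
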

\begin{proof}
It follows from \cite[Proposition 3.1]{Sh} that $T_m$ is a smooth section. Further, $C_m$ and $\widetilde{\Gamma}$ meet transversally in $4m^2$ points of the form $(p,0)$, with $p\in E[2m]$. Of these points, precisely $4$ are of the form $(u, 0)$ with $u\in E[2]$. They do not contribute to the intersection of $T_m$ and $\Gamma$. Hence $T_m$ and $\Gamma$ meet transversally at $2m^2-2$ points. None of these points lie in $\Gamma_{\mathrm{sing}}$ (recall Definition \ref{singsec}).


\vskip 4pt

To compute the class of $T_m$, let $\Lambda \seq \text{Pic}(\text{Kum}(E \times E))$ denote  the lattice generated by $E$, $\Gamma$ and all fibre components avoiding $\Gamma$. From \cite[Proposition 3.1]{Sh} and  \cite[Theorem 6.3]{SS}, we have the expression
$T_m=mT_1+y$, for some $y \in \Lambda$. Write $y=aE+b\Gamma+D$ for integers $a,b$, where $D$ is a sum of fibre components avoiding $\Gamma$. As $T_m$ and $T_1$ are sections, intersecting with $E$ shows that  $b=1-m$. Intersecting with $\Gamma$ then gives $2m^2-2=a-2(1-m)$, so $a=2m(m-1)$. Lastly, one easily computes $(T_1 \cdot T_m)=2(m-1)^2-2$ in the  same manner as the computation of $(\Gamma \cdot T_m)$. After evaluating $(T_m-mT_1)^2$, this produces $(D)^2=0$, which implies that $D$ is trivial, since $D$ is a sum of fibre components avoiding $\Gamma$.
\end{proof}

To finish the proof, we only need to show that the embedding $$\phi \; : \; \Omega_g \hookrightarrow \mathrm{Pic}(\mathrm{Kum}(E \times E))$$ given by $E \mapsto E$, $\Gamma \mapsto \Gamma$ and $\eta \mapsto T_1-2E-\Gamma$ is primitive (since $T_1$ does not lie in $\Lambda$ by \cite[Proposition 3.1]{Sh}, the map $\phi$ is indeed an embedding). It will then follow by specialization to $\mbox{Kum}(E \times E)$ that the transversality assumption of Proposition \ref{nonreduced-prop} is verified for a general surface $Y_g$.

\begin{lem} \label{embedding-is-prim}
The embedding $\phi \; : \; \Omega_g \hookrightarrow \mathrm{Pic}(\mathrm{Kum}(E \times E))$ is primitive.
\end{lem}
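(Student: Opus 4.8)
The plan is to show primitivity by exhibiting a full-rank lattice in $\mathrm{Pic}(\mathrm{Kum}(E\times E))$ that splits off $\phi(\Omega_g)$ as a direct summand, or, more directly, by showing that the quotient $\mathrm{Pic}(\mathrm{Kum}(E\times E))/\phi(\Omega_g)$ is torsion-free. Concretely, I would work inside the sublattice $\Lambda' \seq \mathrm{Pic}(\mathrm{Kum}(E\times E))$ generated by $E$, $\Gamma$, $T_1$ and the fibre components avoiding $\Gamma$ (so $\Lambda'$ contains $\Lambda$ and one extra generator $T_1$). By Shioda's results quoted above, $T_1$ is a generator of a free part of the Mordell--Weil group, so $\Lambda'=\Lambda\oplus\mathbb{Z}\cdot(\text{something})$ only up to the Shioda--Tate description; the key point is that $\{E,\Gamma,T_1\}$ together with the fibre components form part of a basis, hence $\mathbb{Z} E\oplus\mathbb{Z}\Gamma\oplus\mathbb{Z}T_1$ is primitive in $\mathrm{Pic}$. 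Since $\phi(\eta)=T_1-2E-\Gamma$, a unimodular change of basis replaces $\phi(\Omega_g)=\langle E,\Gamma, T_1-2E-\Gamma\rangle$ by $\langle E,\Gamma,T_1\rangle$, so $\phi(\Omega_g)$ is primitive as soon as $\langle E,\Gamma,T_1\rangle$ is.

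So the real content reduces to: $\mathbb{Z}E\oplus\mathbb{Z}\Gamma\oplus\mathbb{Z}T_1$ is a primitive sublattice of $\mathrm{Pic}(\mathrm{Kum}(E\times E))$. Here I would invoke the Shioda--Tate exact sequence for the elliptic fibration $\mathrm{Kum}(E\times E)\to\PP^1$: $\mathrm{Pic}$ is generated by the zero section $\Gamma$, the general fibre $E$, the non-identity components of the reducible fibres, and a set of sections generating $MW$ modulo torsion. The fibration $\mathrm{Kum}(E\times E)\to \PP^1$ has four $I_0^*$ fibres (over the four $2$-torsion points of the base), and one reads off from \cite{Sh} that $MW$ modulo torsion is generated by $T_1$ together with the other ``diagonal'' classes, but in any case $T_1$ is primitive in $MW/MW_{\mathrm{tors}}$ and is not in the span of $\Gamma$, $E$ and fibre components. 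Therefore $\{E,\Gamma,T_1\}$ extends to a $\mathbb{Z}$-basis of a direct summand of $\mathrm{Pic}$, which gives primitivity of $\langle E,\Gamma,T_1\rangle$ and hence of $\phi(\Omega_g)$.

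Alternatively — and this is likely the cleanest route, the one Kloosterman's argument probably takes — one checks primitivity numerically. A rank-$3$ sublattice $N\seq\mathrm{Pic}$ is primitive iff for every prime $p$, $N$ is not contained in $p\cdot\mathrm{Pic}+N'$ for a proper... more simply: $\phi(\Omega_g)$ is primitive iff whenever $x\in\mathrm{Pic}$ and $nx\in\phi(\Omega_g)$ for some $n>1$ then $x\in\phi(\Omega_g)$. Using the explicit intersection form on $\mathrm{Pic}(\mathrm{Kum}(E\times E))$ restricted to $E,\Gamma,T_1$ (which is $\begin{pmatrix}0&1&1\\1&-2&0\\1&0&-2\end{pmatrix}$ after using $(E\cdot T_1)=1$, $(\Gamma\cdot T_1)=0$, $(T_1)^2=-2$ — note $\phi(\eta)^2 = (T_1-2E-\Gamma)^2 = -2 -4(E\cdot T_1) + \ldots$ must work out to $-4$, which pins down the entries), one computes the discriminant of $\phi(\Omega_g)$ and compares it with the discriminant of the saturation. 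Since the discriminant of $\Omega_g$ is $-4(2g-2) = -8(g-1)$, divisibility obstructions only live at primes dividing $8(g-1)$, and for each such prime one rules out an intermediate over-lattice by a short congruence computation using the known Picard lattice of $\mathrm{Kum}(E\times E)$.

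The main obstacle is pinning down exactly which lattice $\mathrm{Pic}(\mathrm{Kum}(E\times E))$ is and where $T_1$ sits in it: one must correctly account for the four $I_0^*$ fibres and the $2$-torsion sections, and verify $T_1$ is primitive in the Mordell--Weil lattice rather than a multiple of another section. Once the Shioda--Tate picture is set up correctly, primitivity of $\langle E,\Gamma,T_1\rangle$ — and therefore of $\phi(\Omega_g)$ — is essentially formal. I would therefore spend the bulk of the write-up carefully citing \cite{Sh} for the structure of $MW(\mathrm{Kum}(E\times E))$ and for the statement $T_1\notin\Lambda$, and then close with the one-line unimodular-change-of-basis remark that transfers primitivity from $\langle E,\Gamma,T_1\rangle$ to $\phi(\Omega_g)=\langle E,\Gamma,T_1-2E-\Gamma\rangle$.
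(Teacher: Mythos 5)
There is a genuine gap, and both of your proposed routes stall at concrete points. In route 1, the reduction from $\phi(\Omega_g)=\langle E,\Gamma,T_1-2E-\Gamma\rangle$ to $\langle E,\Gamma,T_1\rangle$ is fine (an integral, unimodular change of basis), but the Shioda--Tate argument you then sketch rests on claims that fail for $\mathrm{Kum}(E\times E)$. The lattice spanned by $\Gamma$, $E$ and all fibre components is \emph{not} primitive in $\mathrm{Pic}(\mathrm{Kum}(E\times E))$: this fibration has four $I_0^*$ fibres and $2$-torsion sections, and the torsion of the Mordell--Weil group is exactly the quotient of the saturation of that ``trivial lattice'' by the lattice itself; so ``$\{E,\Gamma,T_1\}$ together with the fibre components form part of a basis'' is false. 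Worse, the fact you defer to checking --- that $T_1$ is primitive in $MW$ modulo torsion --- is actually false: over a general point $\pm p$ of the base the fibre is identified with $E$ with origin at its intersection with $\Gamma$, and $T_m$ meets it in the image of $(p,2mp)$, so in the Mordell--Weil group $T_1$ equals \emph{twice} the section induced by the diagonal $\{(p,p)\}$ (consistently, its Shioda height is $4$, four times that of the diagonal section). Hence the projection-to-$MW$ argument cannot deliver primitivity of $\langle E,\Gamma,T_1\rangle$; that sublattice is indeed primitive, but not for the reason you give, and the verification you postponed is precisely where the plan collapses.

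Route 2 is closer in spirit to the actual proof, but a numerical slip hides the key point: the discriminant of $\Omega_g$ is $4$ (the determinant of the Gram matrix with $E^2=0$, $\Gamma^2=-2$, $\eta^2=-4$, $E\cdot\Gamma=1$), not $-4(2g-2)$; you have confused $\Omega_g$ with $\Upsilon_g$, and indeed your own Gram matrix for $\langle E,\Gamma,T_1\rangle$ has determinant $4$. With the correct value, the unspecified ``congruence computation'' is unnecessary and no information about $\mathrm{Pic}(\mathrm{Kum}(E\times E))$ is needed beyond its being even: if $\Delta$ is the saturation of $\phi(\Omega_g)$ and $k=[\Delta:\phi(\Omega_g)]>1$, then $\mathrm{disc}(\Delta)=4/k^2=1$, so $\Delta$ would be an even unimodular lattice of rank $3$, which cannot exist since the signature of an even unimodular lattice is divisible by $8$. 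That two-line argument is the paper's proof; as written, your proposal neither finds it nor supplies a working substitute.
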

\begin{proof}
Let $\Delta$ denote the smallest primitive sublattice of $\text{Pic}(\mathrm{Kum}(E \times E))$ containing the image of $\Omega_g$, that is,
$$ \Delta= \text{Im}(\Omega_g) \otimes_{\mathbb{Z}} \mathbb{Q} \ \cap \ \text{Pic}(\mathrm{Kum}(E \times E)).$$
Since $\Omega_g$ has discriminant $4$, it follows that $\text{disc}(\Delta)=\frac{4}{k^2}$ for some positive integer $k$. If $\phi$ is not primitive, we must have $\text{disc}(\Delta)=1$, which implies that $\Delta$ is a rank $3$, even, unimodular lattice. But no such lattice exists since the signature of any even, unimodular lattice is necessarily divisible by $8$, see \cite[Chapter II, Theorem 5.1]{HM}.
\end{proof}

\vskip 5pt
We will also need the following facts:

\begin{lem} \label{non-eff}
Let $X_g$ be a Barth--Verra surface of genus $g$. Then the class $c \eta \in \mathrm{Pic}(X_g)$ is not effective, for any $c\in \mathbb Z$. Furthermore, every divisor $D \in |L+c\eta|$ is integral.
\end{lem}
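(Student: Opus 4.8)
The plan is to use the fact that on a Barth--Verra surface the Picard lattice is $\Upsilon_g=\ZZ L\oplus \ZZ\eta$ with $L^2=2g-2>0$, $\eta^2=-4$ and $L\cdot\eta=0$, together with the sign-normalization that $L$ is big and nef. For the first assertion, suppose $c\eta$ were effective for some $c\neq 0$ (the case $c=0$ being trivial, as $\mathcal{O}_{X_g}$ is effective but equal to $0$ in $\mathrm{Pic}$, and we may take $c>0$ after replacing $\eta$ by $-\eta$ if needed, since $-\eta$ is effective iff $\eta$ is not, unless both are). The key point is that $L$ is nef, so any effective divisor $D$ satisfies $L\cdot D\geq 0$; but $L\cdot(c\eta)=0$, which forces every component of $c\eta$ to be contracted by $\phi_L$, i.e.\ to be a $(-2)$-curve orthogonal to $L$. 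Writing such a component as $aL+b\eta$ with $L\cdot(aL+b\eta)=a(2g-2)=0$ gives $a=0$, so the component is a multiple of $\eta$, hence has self-intersection a negative multiple of $4$, contradicting $(-2)$. Therefore no such component exists and $c\eta$ is not effective for any $c$.

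For the second assertion, let $D\in|L+c\eta|$ and suppose $D$ decomposes as $D=D_1+D_2$ with $D_1,D_2$ effective and nonzero. Since $L$ is big and nef and $D$ is effective with $L\cdot D=L^2=2g-2>0$, the divisor $D$ is in particular nonzero and we may ask which components are $L$-contracted. Write $D_i=a_iL+b_i\eta$ in $\mathrm{Pic}(X_g)$ (using that $\mathrm{Pic}(X_g)=\Upsilon_g$); then $a_1+a_2=1$ and $b_1+b_2=c$. From $L\cdot D_i=a_i(2g-2)\geq 0$ we get $a_i\geq 0$, so $\{a_1,a_2\}=\{0,1\}$; say $a_1=0$, $a_2=1$. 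Then $D_1=b_1\eta$ is effective, contradicting the first part of the lemma unless $b_1=0$; but $b_1=0$ means $D_1$ is numerically trivial and effective, hence $D_1$ is a sum of $(-2)$-curves of class $0$, which is impossible for a nonzero effective divisor. Hence no nontrivial decomposition exists and $D$ is integral (here one also uses that a $K3$ surface with the given Picard lattice has no curves of self-intersection $<-2$, and that an effective class numerically equivalent to zero must be zero).

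The main obstacle I anticipate is handling the bookkeeping around non-reduced or $L$-contracted components carefully: one must rule out that $D$ has an embedded or multiple component supported on a curve class orthogonal to $L$, which again comes down to the lattice-theoretic observation that the only classes $aL+b\eta$ with $L\cdot(aL+b\eta)=0$ are the multiples of $\eta$, none of which is effective by the first part. Once that is in place, the decomposition argument is forced. An alternative, cleaner route for the second statement is to observe directly that if $D=D_1+D_2$ nontrivially then, after intersecting with the nef class $L$, one summand is $L$-orthogonal hence (by the lattice computation) a nonzero multiple of $\eta$, contradicting the first assertion; I would present it this way to keep the argument short.
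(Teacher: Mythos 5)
Your proposal is correct and follows essentially the same route as the paper: both parts reduce to the lattice computation that any class $aL+b\eta$ orthogonal to $L$ has $a=0$ and self-intersection $-4b^2$, hence cannot be the $(-2)$-class of an irreducible component, and the integrality of $D\in|L+c\eta|$ is deduced by the same decomposition argument $D_i=a_iL+b_i\eta$, $a_1+a_2=1$, intersecting with the nef class $L$. The only cosmetic difference is in how you produce the $(-2)$-curve in the first part (via $L\cdot(c\eta)=0$ and nefness of $L$, rather than the paper's use of $(c\eta)^2=-4c^2<0$ to find a non-nef component), which changes nothing essential.
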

\begin{proof}
Indeed, suppose $c \eta$ is effective. Since $(c\eta)^2=-4c^2$, there exists an integral component $R$ of $c \eta$ with $(R \cdot c \eta)<0$, so that $R$ is not nef and thus $(R)^2=-2$. But then $(L \cdot \eta)=0$ and $L$ is nef, so we must have $(R \cdot L)=0$. Thus if $R=aL+b\eta$, for integers $a,b$, one must have $a=0$ so $(R)^2=-4b^2$, contradicting that $(R)^2=-2$.

\vskip 3pt

Now assume $|L+c\eta|\neq \emptyset $, and suppose we have $D_1+D_2 \in |L+c\eta|$, for effective divisors $D_i=a_iL+b_i \eta$, $i=1,2$ and integers $a_i, b_i$. Intersecting with the nef line bundle $L$, it follows $a_i \geq 0$. Since $a_1+a_2=1$, we may assume $a_1=0$. As $D_1=b_1 \eta$ is effective, we must have $b_1=0$, that is, $D_1=0$. Thus all divisors $D\in |L+c\eta|$ are integral.
\end{proof}

\vskip 5pt
In the interest of completeness, we record the following elementary lemma.
\begin{lem} \label{discon}
Let $\pi \; : \mathcal{C} \to B$ be a flat, proper family of connected, nodal curves of genus $g$. Let $0 \in B$, and assume all the nodes of $\pi^{-1}(0)$ are disconnecting and that $\pi^{-1}(t)$ is integral, for $t \in B-\{0\}$. Then there is an open subset $U \seq B$ such that $\pi^{-1}(t)$ is smooth, for $t  \in U-\{0\}$.
\end{lem}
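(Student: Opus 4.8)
The plan is to reduce the statement to a computation of the arithmetic genus of the components obtained by normalizing at the disconnecting nodes, and then to use the fact that a family whose total space is regular enough cannot acquire ``extra'' singularities in codimension one. First I would observe that since $\pi^{-1}(0)$ is connected and nodal with all $\delta$ of its nodes disconnecting, its normalization $\nu\colon \widetilde{C}_0\to \pi^{-1}(0)$ is a disjoint union of $\delta+1$ connected curves, and the dual graph of $\pi^{-1}(0)$ is a tree on $\delta+1$ vertices. Consequently the arithmetic genus satisfies $p_a(\pi^{-1}(0))=\sum_j p_a(\widetilde{C}_{0,j})+\delta-\delta = \sum_j p_a(\widetilde{C}_{0,j})$; here the $\delta$ added by the nodes is exactly cancelled by the drop in the number of connected components, since for a nodal curve $p_a = \sum_j p_a(\widetilde{C}_j) + (\#\text{nodes}) - (\#\text{components of }\widetilde{C}) + 1$. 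Thus $g = p_a(\pi^{-1}(0)) = \sum_{j} g_j$ where $g_j:=p_a(\widetilde{C}_{0,j})\geq 0$.

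Next I would argue by contradiction: suppose no such open neighbourhood $U$ exists, so that the locus of $t\in B$ with $\pi^{-1}(t)$ singular contains $0$ in its closure along a curve through $0$ other than just $\{0\}$ itself. After base change to a smooth curve $B'\to B$ mapping to this locus and sending a point $0'\mapsto 0$, and after replacing $\mathcal{C}$ by a resolution of the total space (or by the stable reduction of the family), we obtain a family over a smooth curve whose generic fibre is still integral — hence irreducible of geometric genus at most $g$ — and whose special fibre still has all nodes disconnecting and total arithmetic genus $g$. The key point is the semicontinuity of the geometric genus in a family of integral curves together with additivity of $p_a$ under the degeneration: the generic fibre, being integral, has $p_a = g$ and geometric genus $g_{\mathrm{geom}}\le g$, with equality if and only if it is smooth. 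If the generic fibre were singular, its geometric genus would be strictly less than $g$; but under specialization to $\pi^{-1}(0)$, which is a tree of smooth curves $\widetilde{C}_{0,j}$ of genera summing to $g$, the geometric genus can only drop further or stay the same — so the generic fibre must in fact be smooth, a contradiction. To make this precise, I would invoke the standard fact (e.g.\ via the relative dualizing sheaf, or via simultaneous normalization results of Teissier) that for a flat proper family of reduced curves, the function $t\mapsto \dim H^1(\pi^{-1}(t),\nu_{t*}\mathcal O_{\widetilde{(\pi^{-1}(t))}})$ is upper semicontinuous, and that $\delta$-invariants are upper semicontinuous; combining this with $\sum_j g_j=g$ forces the nearby integral fibres to be smooth.

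The main obstacle I anticipate is handling families where the total space $\mathcal{C}$ is itself singular at a node of $\pi^{-1}(0)$ — a disconnecting node of the special fibre need not be a smooth point of $\mathcal{C}$, and blowing up can change the combinatorics. The cleanest way around this is to localize at a single disconnecting node $p$ of $\pi^{-1}(0)$: étale-locally near $p$ the total space is $\operatorname{Spec} k[[x,y,t]]/(xy-f(t))$ for some $f$ with $f(0)=0$, and since removing $p$ disconnects the special fibre, $x$ and $y$ cut out the two analytic branches which lie in different global components. For $t\neq 0$ the local equation $xy=f(t)$ is smooth at the origin whenever $f(t)\neq 0$, and $f$ vanishes only at $t=0$ in a punctured neighbourhood unless $f\equiv 0$; the case $f\equiv 0$ would mean the node persists in nearby fibres, but then those fibres would be disconnected there or reducible, contradicting that $\pi^{-1}(t)$ is integral for $t\ne 0$. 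Repeating over the finitely many nodes and intersecting the resulting open sets yields $U$. I would present the argument via this local normal form, as it is the most self-contained route and avoids invoking heavy machinery for what the paper flags as an ``elementary lemma''.
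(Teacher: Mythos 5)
The crux of your argument --- the ``cleanest route'' via the local normal form --- has a genuine gap at exactly the point where the lemma has real content. \'Etale--locally at a disconnecting node $p$ of $\pi^{-1}(0)$ the family is indeed of the form $xy=f$, and the fibre over $b$ is smooth near $p$ precisely when $f(b)\neq 0$. But your dismissal of the persistent case (``$f\equiv 0$ would mean the node persists in nearby fibres, but then those fibres would be disconnected there or reducible'') is a non sequitur: being disconnecting is not a local property, and the two branches at the persistent node in a \emph{nearby} fibre may very well be joined to each other elsewhere in that fibre, making it an integral curve with a non-disconnecting node --- no contradiction with the hypotheses. (A family of irreducible one-nodal curves with a constant node, obtained by gluing two disjoint sections of a family of smooth curves, shows that $f\equiv 0$ is perfectly compatible with integral fibres; what must be ruled out is that such a family can have a limit whose node is \emph{disconnecting}, and that is the actual content of the lemma.) To close this you need a global specialization input, e.g.: after restricting to a curve in the base through $0$ contained in the discriminant and normalizing, partially normalize the family along the persistent node; the generic fibre of the new family is connected (partial normalization of an integral curve at one node is irreducible), while the special fibre is disconnected since $p$ is disconnecting, contradicting Zariski connectedness/Stein factorization over a normal one-dimensional base. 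A secondary, fixable issue: your statement ``$f$ vanishes only at $t=0$ in a punctured neighbourhood unless $f\equiv 0$'' tacitly assumes $\dim B=1$; for a general base $f$ can vanish along a divisor through $0$, so one must first reduce to a curve in the base before the dichotomy even makes sense.

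Your second paragraph does not repair this: upper semicontinuity of the $\delta$-invariant gives $\delta(\pi^{-1}(t))\leq\delta(\pi^{-1}(0))$, which is the useless direction here; what you would need is that $p_a-\sum_j g(\widetilde{C}_j)$ (the number of independent cycles of the dual graph, i.e.\ non-disconnecting nodes) cannot drop under specialization --- and that is precisely the statement being proved, not an independent citable fact in the form you invoke it. For comparison, the paper's proof packages exactly this input into one line: the moduli map $B\to\overline{\mathcal{M}}_g$ sends $0$ outside the boundary divisor $\Delta_0$ (curves with a non-disconnecting node), $\Delta_0$ is closed, and an integral singular nodal curve lies in $\Delta_0$; hence nearby integral fibres are smooth. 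Either that argument, or the Stein-factorization argument on the partially normalized family sketched above, is the missing ingredient in your proposal.
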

\begin{proof}
Denote by $m:B\rightarrow \mm_g$ the moduli map induced by $\pi$. The hypothesis implies that the point $m(0)$ corresponding to the stable model of $\pi^{-1}(0)$ does not lie in the boundary divisor $\Delta_0$ of $\mm_g$ of irreducible singular curves and their degenerations. Then there exists an open set $0\in U\subset B$, such that $m(t):=[\pi^{-1}(t)]\in \mm_g-\Delta_0$, for all $t\in U$. Since $\pi^{-1}(t)$ is assumed to be integral, the conclusion follows.

\end{proof}

\vskip 5pt

\begin{proof}[Proof of Theorem \ref{generic-transversal}]
As already explained in the introduction, we can deform a general Barth--Verra surface $X_g$ to a general $\Omega_g$-polarized surface $Y_g$. By Lemma \ref{non-eff}, all divisors $C\subset X_g$ in the linear system $|L|$ are integral. The result follows from Lemma \ref{locus-via-sheaves}, Proposition \ref{nonreduced-prop} and the transversality result above. Indeed, from Proposition \ref{nonreduced-prop} all curves from $Z(Y_g,L, \eta^{\otimes \ell})$ are nodal with only disconnecting nodes. This ensures that Lemma \ref{discon} can be applied.  From \cite{BV} we have that the support of $Z=Z(X_g,L,\eta^{\otimes \ell})$ contains at most ${2\ell^2-2 \choose g}$ points, whereas by deformation to  $Y_g$ and Proposition \ref{nonreduced-prop}, $Z$ contains at least ${2\ell^2-2 \choose g}$ points. By Lemma \ref{discon} all curves corresponding to points in $Z$ are smooth.
\end{proof}

\section{Syzygies of paracanonical curves on Barth--Verra $K3$ surfaces}

In this section we prove Theorems \ref{pg} and \ref{pgeven}. We fix integers $g\geq 7$ and $\ell\geq \sqrt{\frac{g+2}{2}} $ and consider a  Barth--Verra $K3$ surface $X_g$ of genus $g$ having $\mbox{Pic}(X_g)=\Upsilon_g=\mathbb Z\cdot L\oplus \mathbb Z\cdot \eta$ as in Definition \ref{bv2}. We set $H:=L+\eta\in \mbox{Pic}(X_g)$, thus the smooth curves from the linear system $|H|$ have genus $g-2$.

\vskip 4pt



\vskip 4pt

Using Theorem \ref{generic-transversal}, there exists a smooth  genus $g$ curve $C\in |L|$,  such that $\mbox{ord}(\eta_C)=\ell$. In particular, $[C, \eta_C]\in \cR_{g,\ell}$ and
$$\phi_{|H_C|}:C\hookrightarrow \PP^{g-2}$$
is a level $\ell$ paracanonical curve. We shall verify Theorems \ref{pg} and \ref{pgeven} for $[C, \eta_C]$, depending on whether $g$ is odd or even. Our proof follows along the lines of \cite{FK} and here we just outline the main steps, while highlighting the differences.

\vskip 4pt
We first need a lemma.
\begin{lem} \label{needed-for-green}
Assume $g \geq 7$. Then $H^1(X_g, qH-L)=0$ for $q \geq 1$.
\end{lem}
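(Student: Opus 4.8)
The plan is to compute the cohomology of the line bundle $qH - L = (q-1)L + q\eta$ on the Barth--Verra surface $X_g$ using Riemann--Roch together with vanishing theorems, distinguishing the case $q=1$ from $q\geq 2$. First I would observe that for $q=1$ the bundle is simply $\eta = H-L$, and by Lemma \ref{non-eff} (applied with $c=1$) the class $\eta$ is not effective; symmetrically $-\eta$ is not effective since $(-\eta)^2 = -4 < 0$ and the same argument as in Lemma \ref{non-eff} rules out an integral $(-2)$-component meeting $L$ trivially. Hence $h^0(X_g,\eta) = h^2(X_g,\eta) = 0$, and since $\chi(\eta) = \eta^2/2 + 2 = -2+2 = 0$ by Riemann--Roch on a $K3$ surface, we get $h^1(X_g,\eta)=0$.

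For $q\geq 2$, the strategy is to show $H^1(X_g, qH-L) = 0$ by exhibiting $qH-L$ (or rather a twist whose cohomology is the same) as big and nef, so that Kawamata--Viehweg vanishing applies; equivalently, since we are on a $K3$, one checks directly that $h^0$ and $h^2$ add up to $\chi$. Write $D := qH-L = (q-1)L+q\eta$. Intersecting with the ample class $L$ gives $D\cdot L = (q-1)(2g-2) > 0$, and $D^2 = (q-1)^2(2g-2) - 4q^2$. The key point is that for $q\geq 2$ and $g\geq 7$ one has $D^2 > 0$: indeed $(q-1)^2(2g-2) \geq (q-1)^2\cdot 12$ and one checks $(q-1)^2\cdot 12 > 4q^2$ for all $q\geq 2$ (at $q=2$ this is $12 > 16$—wait, so the bound $g\geq 7$ is genuinely needed: $(q-1)^2(2g-2) > 4q^2$ at $q=2$ reads $2g-2 > 16$, i.e. $g > 9$, so for $g\in\{7,8,9\}$ a separate small check or a different argument is required). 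So the honest plan is: for $q$ large the bigness of $D$ is clear, and for the finitely many small pairs $(q,g)$ with $g\in\{7,8,9\}$ and $q=2$ I would either argue $h^2(X_g,D) = h^0(X_g, L-qH) = h^0(X_g,(1-q)L - q\eta) = 0$ (no sections since $L$ is nef and the $L$-degree is negative) and $h^0(X_g,D) = \chi(D) = D^2/2+2$ provided $h^1$ vanishes—which is circular—so instead use that $D - L = (q-2)L + q\eta$; for $q=2$ this is $2\eta$, non-effective by Lemma \ref{non-eff}, and then analyze the exact sequence $0\to D-L\to D\to D\otimes\OO_C\to 0$ for $C\in|L|$, reducing $H^1(X_g,D)$ to $H^0$ and $H^1$ of a line bundle of degree $D\cdot L$ on the genus-$g$ curve $C$.

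The cleanest uniform argument, which I would actually try first, is: for $q\geq 2$ consider the exact sequence $0 \to (q-1)H - L \to qH - L \to (qH-L)|_C \to 0$ where $C\in|H|$ is a smooth curve of genus $g-2$; since $(qH-L)|_C$ has degree $(qH-L)\cdot H = q(2g-6) - (2g-2)$, which for $q\geq 2$ is at least $2(2g-6)-(2g-2) = 2g-10 \geq 2(g-2)-1$ when $g\geq 7$, the restricted bundle is nonspecial by Riemann--Roch on $C$, giving $H^1(C, (qH-L)|_C) = 0$; then induction on $q$ together with the base case $H^1(X_g,H-L) = H^1(X_g,\eta) = 0$ established above closes the argument, provided one also controls $H^2(X_g,(q-1)H-L) = H^0(X_g, L - (q-1)H)$, which vanishes because its $H$-degree is negative for $q\geq 2$. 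The main obstacle is bookkeeping the borderline numerics so that the restricted-bundle degree genuinely exceeds $2g(C)-2$; I expect the inequality $q(2g-6) - (2g-2) > 2(g-2) - 2 = 2g-6$ to hold exactly when $(q-1)(2g-6) > 2g-4$, which for $q\geq 2$ needs $2g-6 > 2g-4$—false—so the restriction is at best of degree equal to $2g-6$ at $q=2$, landing in the range where one must rule out a theta-characteristic-type special line bundle by hand; handling that degenerate case on the special curve $C\in|H|$ (likely via the adjunction $K_C = H|_C$ and the nonexistence of sections of $(q-1)H - L$ restricted further) is where the real work sits.
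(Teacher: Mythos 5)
Your base case $q=1$ and your ``cleanest uniform argument'' for $q\geq 3$ (restrict to a smooth curve in $|H|$ and use that the restricted bundle has degree larger than $2g(C)-2$) coincide with the paper's proof, which runs the same induction via the sequence $0 \to \mathcal{O}_{X_g}(qH-L) \to \mathcal{O}_{X_g}((q+1)H-L) \to \omega_D((q-1)H+\eta) \to 0$ for smooth $D\in |H|$. The genuine gap is the remaining case $q=2$, i.e.\ the vanishing $H^1(X_g, 2H-L)=H^1(X_g,L+2\eta)=0$, which you correctly identify as the borderline case but explicitly leave unresolved (``where the real work sits''), and neither of your fallback routes closes it: $L+2\eta$ is not big when $7\leq g\leq 9$ since $(L+2\eta)^2=2g-18\leq 0$ (and its nefness is never checked), so Kawamata--Viehweg is unavailable; and restricting to $C\in |L|$ fails because $H^1(X_g,2\eta)\neq 0$ --- indeed $h^0(\pm 2\eta)=0$ by Lemma \ref{non-eff} while $\chi(2\eta)=-6$, so $h^1(2\eta)=6$ --- hence the long exact sequence only bounds $h^1(L+2\eta)$ in terms of the cokernel of a restriction map you cannot control without already knowing $h^0(L+2\eta)$. (A minor slip along the way: the inequality ``$2g-10\geq 2(g-2)-1$'' is false, though you later recognize that the degree argument breaks precisely at $q=2$.)

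For the record, the paper closes this case with a short trick you did not find: if $h^1(L+2\eta)>0$, then since $(L+2\eta)^2=2g-18\geq -4$ for $g\geq 7$ and $h^2(L+2\eta)=h^0(-L-2\eta)=0$, Riemann--Roch forces $L+2\eta$ to be effective; picking $D\in |L+2\eta|$, Serre duality gives $h^0(\mathcal{O}_D)=1+h^1(X_g,L+2\eta)>1$, contradicting the fact that every member of $|L+2\eta|$ is integral (Lemma \ref{non-eff}). Alternatively, your own parenthetical hint can be completed along your lines: for smooth $C\in |H|$ one has $(2H-L)|_C\cong \omega_C\otimes \eta_C$, so $H^1(C,(2H-L)|_C)\cong H^0(C,\eta_C^{\vee})^{\vee}$ vanishes as soon as $\eta_C\not\cong \mathcal{O}_C$, and this nontriviality follows from $0\to \mathcal{O}_{X_g}(-L)\to \mathcal{O}_{X_g}(\eta)\to \eta_C\to 0$ together with $H^0(X_g,\eta)=0$ (Lemma \ref{non-eff}) and $H^1(X_g,-L)=0$ because $L$ is big and nef. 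Without supplying one of these arguments, the proposal does not prove the lemma.
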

\begin{proof}
For $q=1$, we write $H-L=\eta$, which has no first cohomology from Lemma \ref{non-eff}. We now prove the claim by induction on $q \geq 2$. We have $2H-L=L+2\eta$. Suppose for a contradiction that $h^1(X_g, L+2\eta)>0$. As we are assuming $g \geq 7$, we have $(L+2\eta)^2 \geq -4$ and furthermore $L \cdot (L+2\eta)>0$, so $L+2\eta$ is effective by Riemann--Roch. Choose any $D \in |L+2\eta|$. We obtain $h^0(\mathcal{O}_D)=h^1(X_g, \mathcal{O}_{X_g}(-D))+1=h^1(L+2\eta)+1>1$, by assumption. This contradicts Lemma \ref{non-eff}.

We now prove the induction step. Choose a smooth element $D \in |H|$ is smooth. For $q \geq 2$, we have a short exact sequence
$$ 0 \longrightarrow \mathcal{O}_{X_g}(qH-L) \longrightarrow \mathcal{O}_{X_g}((q+1)H-L) \longrightarrow  \omega_D\bigl((q-1)H+\eta\bigr) \longrightarrow 0.$$ Since $H \cdot ((q-1)H+\eta)>0$, the claim follows by induction.

\vskip 3pt





\end{proof}

\vskip 5pt

Using the vanishing provided by Lemma \ref{needed-for-green}, we are in a position to write down Green's exact sequence \cite[Theorem 3.b.1]{G} of Koszul cohomology groups on $X=X_g$:

\begin{equation}\label{grseq}
\cdots \longrightarrow K_{p,q}(X,H)\longrightarrow K_{p,q}(C,H_C)\longrightarrow K_{p-1,q+1}(X,-C,H)\longrightarrow \cdots,
\end{equation}
where the group $K_{p-1,q+1}(X,-C,H)$ is computed by the following part of the Koszul complex:
$$\cdots \longrightarrow \bigwedge^p H^0(X,H)\otimes H^0(X,qH-C)\stackrel{d_{p,q}}\longrightarrow \bigwedge^{p-1} H^0(X,H)\otimes H^0(X, (q+1)H-C)\stackrel{d_{p-1,q+1}}\longrightarrow
$$
$$\stackrel{d_{p-1,q+1}}\longrightarrow \bigwedge^{p-2} H^0\bigl(X,H)\otimes H^0(X, (q+2)H-C)\longrightarrow \cdots.$$

\vskip 4pt

\subsection{The Prym--Green Conjecture in genus $g=2i+5$.}

According to \cite{CEFS}, the naturality of the resolution of the paracanonical curve $C\subset \PP^{g-2}$ is equivalent to the following statements:
$$K_{i+1,1}(C,H_C)=0 \ \ \mbox{ and } \ \ K_{i-1,2}(C,H_C)=0.$$ Using the sequence (\ref{grseq}) for $(p,q)=(i+1,1)$ and $(p,q)=(i-1,2)$ respectively, it suffices to show
\begin{equation}\label{van1}
K_{i+1,1}(X,H)=0 \  \mbox{ and } \ \ K_{i-1,2}(X,H)=0,
\end{equation}
respectively
\begin{equation}\label{van2}
K_{i,2}(X,-C,H)=0 \ \mbox{ and } \ K_{i-2,3}(X,-C,H)=0.
\end{equation}

\begin{prop}\label{odd1}
For a Barth--Verra surface $X_g$ of genus $g=2i+5$, we have that:
$$K_{i+1,1}(X,H)=0 \  \mbox{ and } \ \ K_{i-1,2}(X,H)=0.$$
\end{prop}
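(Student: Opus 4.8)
The plan is to prove the two vanishings $K_{i+1,1}(X,H)=0$ and $K_{i-1,2}(X,H)=0$ for the $K3$ surface $X=X_g$ with $H=L+\eta$ and $g=2i+5$, by reducing them to known results about syzygies of $K3$ surfaces. Since the smooth curves in $|H|$ have genus $g-2 = 2i+3$, and $H$ embeds $X$ by a complete linear system of degree $H^2 = (L+\eta)^2 = (2g-2) + 2(L\cdot\eta) + \eta^2 = (2g-2) - 4 = 2g-6 = 4i+4$ into $\PP^{g-3} = \PP^{2i+2}$, the natural strategy is to compare Koszul cohomology of $(X,H)$ with that of a general hyperplane section curve $D\in |H|$ of genus $h := g-2 = 2i+3$ via the standard restriction/hyperplane sequences for Koszul cohomology on $K3$ surfaces (Green's hyperplane section theorem). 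Concretely, $K_{p,q}(X,H)$ for a $K3$ surface polarized by $H$ is governed by $K_{p,q}(D,H_D) = K_{p,q}(D,K_D)$ since $H_D = K_D$ by adjunction, so the two desired vanishings become statements about the \emph{canonical} syzygies of a general curve of genus $2i+3$.

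The key computation is then: for a general canonical curve $D$ of odd genus $h = 2i+3$, Green's Conjecture (proved by Voisin, cited as \cite{V1} in the excerpt) gives that the resolution is as expected, with $\mathrm{Cliff}(D) = \lfloor (h-1)/2\rfloor = i+1$, so that $K_{p,1}(D,K_D) = 0$ for $p \geq h - 2 - \mathrm{Cliff}(D) + 1 = h - \mathrm{Cliff}(D) - 1 = (2i+3) - (i+1) - 1 = i+1$, and dually $K_{p,2}(D,K_D) = 0$ for $p \leq \mathrm{Cliff}(D) - 2 = i - 1$. Thus $K_{i+1,1}(D,K_D) = 0$ and $K_{i-1,2}(D,K_D) = 0$. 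One must check that a general element of $|H|$ on a general Barth--Verra surface is a Brill--Noether general (or at least Clifford-general) curve of genus $2i+3$ — this should follow because the Picard lattice $\Upsilon_g$ restricted to $D$ does not force any unexpected special linear series; more precisely, by results on Brill--Noether theory of curves on $K3$ surfaces (Lazarsfeld, and the lattice-theoretic refinements), a general curve in $|H|$ on a $K3$ with Picard group of this shape computes its Clifford index generically, i.e. $\mathrm{Cliff}(D) = \lfloor (g-3)/2\rfloor$, and has maximal Clifford index. I would verify this by checking that $H$ is not of the special forms (e.g. $H = 2B$ for $B$ moving, or hyperelliptic/trigonal type classes) that would cause the curves in $|H|$ to be Clifford-special, using that $\mathrm{Pic}(X_g) = \ZZ L \oplus \ZZ \eta$ with the given intersection form.

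The remaining step is to transfer the vanishings from $D$ back to $X$. For $K_{p,1}$, one uses that $K_{p,1}(X,H) \hookrightarrow K_{p,1}(D,H_D)$ (or the exact sequence relating them), so $K_{i+1,1}(D,K_D)=0$ forces $K_{i+1,1}(X,H)=0$ — here one needs $H^1(X,\OO_X)=0$ (true on a $K3$) and the surjectivity/injectivity in Green's restriction sequence, which holds because $H^1(X, qH) = 0$ for all $q$ (again automatic on a $K3$ with $H$ ample). For $K_{p,2}$, the relevant sequence reads $K_{p,1}(D,K_D) \to K_{p-1,2}(X, H-D, H) \to K_{p-1,2}(X,H) \to K_{p-1,2}(D,K_D)$ with $H - D = \OO_X$, so $K_{p-1,2}(X,\OO_X,H) = K_{p-1,2}(X,H)$ up to the contribution of $H^0(\OO_X)$, and one gets $K_{i-1,2}(X,H)$ sandwiched by $K_{i,1}(D,K_D)$ (which need not vanish) and $K_{i-1,2}(D,K_D)=0$; a small diagram chase, using that the map $K_{i,1}(X,H) \to K_{i,1}(D,K_D)$ is surjective, pins down $K_{i-1,2}(X,H)=0$. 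I expect the main obstacle to be the bookkeeping in these hyperplane-section exact sequences — tracking exactly which connecting maps are iso/injective/surjective and invoking the correct cohomological vanishings ($H^i(X, qH)=0$) to make the chase go through — rather than any deep new input; the genuinely substantive fact used is Voisin's theorem on generic Green's Conjecture in odd genus, together with the Brill--Noether genericity of the curves $D\in|H|$.
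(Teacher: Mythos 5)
Your overall architecture is the one the paper uses: reduce $K_{p,q}(X,H)$ to the canonical syzygies of a general hyperplane section $D\in |H|$ of genus $2i+3$ (legitimate, since $H^1(X,qH)=0$ makes Green's hyperplane section theorem give isomorphisms $K_{p,q}(X,H)\cong K_{p,q}(D,K_D)$, so your diagram-chasing worries are unnecessary), and then show $\mathrm{Cliff}(D)=i+1$ is maximal, which the paper gets from the integrality of all members of $|H|$ (Lemma \ref{non-eff}) together with Lazarsfeld's Brill--Noether--Petri theorem \cite{lazarsfeld-bnp} -- essentially your lattice check. The gap is in the step where you conclude the vanishings for $D$: you invoke ``Voisin's theorem on generic Green's Conjecture in odd genus, together with the Brill--Noether genericity of $D$.'' That inference is invalid as stated. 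The curve $D$ is \emph{not} a general curve of genus $2i+3$: it lies on a $K3$ surface with Picard lattice $\Upsilon_g$, and the family of such sections has dimension far below $\dim \mathcal{M}_{2i+3}$. Koszul vanishing is an open condition, so Voisin's statement about the generic curve says nothing about this special (albeit Brill--Noether general) curve; Green's Conjecture is not known for arbitrary Clifford-general curves, and ruling out failure on special Clifford-general curves is precisely the content of the stronger theorems one must quote.

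There are two standard ways to close the gap, and the paper takes the first: cite Aprodu--Farkas \cite{AF}, which (building on \cite{V1}, \cite{V2}) computes the Koszul cohomology of \emph{every} polarized $K3$ surface, giving $K_{p,2}(X,H)=0$ for $p<\mathrm{Cliff}(D)$ and $K_{p,1}(X,H)=0$ for $p\geq 2i+2-\mathrm{Cliff}(D)$; with $\mathrm{Cliff}(D)=i+1$ this is exactly the claimed vanishing, with no need to pass back and forth to $D$ at all. Alternatively, since $g(D)=2i+3$ is odd and $\mathrm{Cliff}(D)$ is maximal, you could combine Voisin's generic vanishing with the Hirschowitz--Ramanan divisor computation (the Koszul divisor in odd genus equals the gonality divisor), which upgrades the generic statement to \emph{all} curves of odd genus with maximal Clifford index; but you did not invoke this, and without one of these two inputs the central step of your argument does not go through. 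A minor slip besides: $h^0(X,H)=g-1$, so $|H|$ maps $X$ to $\PP^{g-2}$ (it is the canonical curve $D$ that lives in $\PP^{g-3}=\PP^{2i+2}$); this does not affect the structure of the argument.
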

\begin{proof} Building upon Voisin's fundamental work \cite{V1} and \cite{V2}, the Koszul cohomology of \emph{every} polarized $K3$ surface $(X,H)$ is completely determined in \cite{AF}. Precisely, if $D\in |H|$ is a general element of the linear system, in accordance with Green's Conjecture, it is shown in \cite{AF} that
$$K_{p,2}(X,H)=0, \ \mbox{ for } p<\mathrm{Cliff}(D), \ \mbox{ and }$$
$$K_{p,1}(X,H)=0, \ \mbox{ for } p\geq 2i+2-\mathrm{Cliff}(D).$$
Since $D$ is a curve of genus $g-2=2i+3$, it suffices to show that the Clifford index of $D$ is maximal, that is, $\mbox{Cliff}(D)=i+1$. But Lemma \ref{non-eff}, together with a well-known result of Lazarsfeld \cite{lazarsfeld-bnp}, implies that the general element of $|H|$ is smooth and Brill--Noether--Petri general. In particular, it has maximal Clifford index. 
\end{proof}

We recall that if $D$ is a smooth curve and $L\in \mbox{Pic}(D)$ is a globally generated line bundle, one denotes by $M_D$ the kernel bundle defined by the exact sequence:
\begin{equation}\label{lazbun}
0\longrightarrow M_D\longrightarrow H^0(D,L)\otimes \OO_D\stackrel{\mathrm{ev}}\longrightarrow L\longrightarrow 0.
\end{equation}

\begin{prop}\label{odd2}
For a general Barth--Verra surface $X_g$ of genus $g=2i+5$, the following holds:
$$K_{i,2}(X,-C,H)=0 \ \mbox{ and } \  \ K_{i-2,3}(X,-C,H)=0.$$
\end{prop}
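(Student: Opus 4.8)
The plan is to follow the strategy of \cite{FK}: reduce both vanishings, by restriction to a hyperplane section of $X$, to twisted Koszul cohomology statements on a general smooth curve, and then appeal to Brill--Noether--Petri generality.

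First one makes a few reductions on $X$. Since $C\in|L|$ and $L\equiv H-\eta$, one has $nH-C\equiv(n-1)H+\eta$ for all $n$, so the module $\bigoplus_nH^0(X,nH-C)$ is, up to a shift in degree, $\bigoplus_nH^0(X,nH+\eta)$; hence $K_{i,2}(X,-C,H)\cong K_{i,1}(X,\eta,H)$ and $K_{i-2,3}(X,-C,H)\cong K_{i-2,2}(X,\eta,H)$. By Lemmas \ref{non-eff} and \ref{needed-for-green} we have $H^0(X,\eta)=H^2(X,\eta)=H^1(X,\eta)=H^1(X,L+2\eta)=0$, so the usual kernel-bundle computation gives
$$K_{i,1}(X,\eta,H)=H^0\bigl(X,\textstyle\bigwedge^{i}M_H\otimes(L+2\eta)\bigr),\qquad K_{i-2,2}(X,\eta,H)=H^1\bigl(X,\textstyle\bigwedge^{i-1}M_H\otimes(L+2\eta)\bigr),$$
where $M_H$ is the kernel bundle of $|H|$ on $X$; and Serre duality on $X$, using $\bigwedge^{i-1}M_H^{\vee}\cong\bigwedge^{i+4}M_H\otimes H$ together with $H-L-2\eta=-\eta$ and $H-\eta=L$, identifies the second group with $H^0\bigl(X,\bigwedge^{i+3}M_H\otimes L\bigr)^{\vee}$. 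Thus it suffices to prove
$$H^0\bigl(X,\textstyle\bigwedge^{i}M_H\otimes(L+2\eta)\bigr)=0\qquad\text{and}\qquad H^0\bigl(X,\textstyle\bigwedge^{i+3}M_H\otimes L\bigr)=0.$$

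The second step is to restrict to a general smooth $D\in|H|$. By Lemma \ref{non-eff} and Lazarsfeld's theorem \cite{lazarsfeld-bnp}, exactly as in the proof of Proposition \ref{odd1}, such a $D$ is Brill--Noether--Petri general of genus $g-2=2i+3$, so $\mathrm{Cliff}(D)=i+1$ is maximal. Adjunction gives $H|_D\cong K_D$, so restricting the defining sequence of $M_H$ yields $M_H|_D\cong M_{K_D}\oplus\mathcal O_D$, while $(L+2\eta)|_D\cong K_D\otimes\eta_D$ and $L|_D\cong K_D\otimes\eta_D^{-1}$, where $\eta_D:=\eta|_D$ has degree $\eta\cdot H=-4$. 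Because each $\bigwedge^jM_H$ is a subsheaf of a trivial bundle and neither $\eta$ nor $-\eta$ is effective (Lemma \ref{non-eff}), the kernels of the restriction maps $\bigwedge^jM_H\otimes(L+2\eta)\to\bigwedge^jM_H|_D\otimes(K_D\otimes\eta_D)$ and $\bigwedge^jM_H\otimes L\to\bigwedge^jM_H|_D\otimes(K_D\otimes\eta_D^{-1})$ have no sections. Chasing the resulting long exact sequences --- equivalently, running a descending induction on the exterior power starting from $\bigwedge^{\mathrm{rk}\,M_H+1}M_H=0$ and invoking the surjectivity of the multiplication maps $H^0(X,H)\otimes H^0(X,B)\to H^0(X,B+H)$ furnished by the known resolution of $(X,H)\subset\PP^{g-2}$ \cite{AF} --- reduces the two statements above to the curve-level vanishings
$$H^0\bigl(D,\textstyle\bigwedge^{p}M_{K_D}\otimes K_D\otimes\eta_D\bigr)=0\ \ (p=i-1,i),\qquad H^0\bigl(D,\textstyle\bigwedge^{p}M_{K_D}\otimes K_D\otimes\eta_D^{-1}\bigr)=0\ \ (p=i+2,i+3).$$

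Finally, these curve-level vanishings are established exactly as in \cite{FK}. The bundle $\bigwedge^pM_{K_D}$ is semistable (of slope $-2$ in each exterior factor), and Green's Conjecture for $D$ (\cite{V1}, \cite{AF}), together with the maximality of $\mathrm{Cliff}(D)$, governs the cohomology of its twists by powers of $K_D$; one then transfers this information across the twist by $\eta_D^{\pm1}$, using crucially that $\eta_D^{-1}=(-\eta)|_D$ is \emph{not} effective on $D$ and that $K_D\otimes\eta_D=(L+2\eta)|_D$ is the restriction of the nef line bundle $L+2\eta$ on $X$. The main obstacle is precisely this last transfer: the relevant bundles $\bigwedge^pM_{K_D}\otimes K_D\otimes\eta_D^{\pm1}$ have Euler characteristic equal to $0$ or only slightly negative, so they lie at the boundary of the Brill--Noether range, where a general semistable bundle with the same numerical invariants can still acquire sections; one must therefore exploit the precise geometric provenance of $\eta_D$ rather than argue by stability alone. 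It is here that we follow \cite{FK} most closely, and here that the parity of $g$ enters.
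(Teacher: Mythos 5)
Your reductions are fine as far as they go, and in fact they run parallel to the paper's: the paper uses \cite[Lemma 2.2]{FK} to restrict the twisted Koszul groups directly to a general $D\in|H|$ and then identifies them, via the kernel bundle $M_{K_D}$, with $H^0\bigl(D,\bigwedge^i M_{K_D}\otimes(2K_D-C_D)\bigr)$ and $H^0\bigl(D,\bigwedge^{i-1}M_{K_D}^{\vee}\otimes(C_D-K_D)\bigr)^{\vee}$, which (since $2K_D-C_D=K_D\otimes\eta_D$ and $\bigwedge^{i-1}M_{K_D}^{\vee}\cong\bigwedge^{i+3}M_{K_D}\otimes K_D$) are exactly the curve-level groups you arrive at. The slope computation you make is also the one in the paper, and it only shows that the vanishings are \emph{expected}.

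The genuine gap is the last step, which you yourself flag as ``the main obstacle'' and then do not carry out: you say one must ``exploit the precise geometric provenance of $\eta_D$'' and ``follow \cite{FK} most closely,'' but no argument is given, and none of the ingredients you have assembled (semistability of $\bigwedge^pM_{K_D}$, maximality of $\mathrm{Cliff}(D)$, non-effectivity of $\pm\eta$) can close it, precisely because the bundles $\bigwedge^pM_{K_D}\otimes K_D\otimes\eta_D^{\pm1}$ sit on the boundary of the Brill--Noether range where such general principles do not force $H^0=0$. The paper's resolution is a further specialization that is entirely absent from your proposal: one degenerates $X$ inside the moduli of lattice-polarized $K3$ surfaces to a \emph{hyperelliptic} Barth--Verra surface $\widehat X$ whose Picard lattice $\widehat{\Upsilon}_g$ contains, besides $L$ and $\eta$, an elliptic pencil $E$ with $L\cdot E=2$, $\eta\cdot E=0$, and into which $\Upsilon_g$ embeds primitively. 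On $\widehat X$ both $C$ and $D$ become hyperelliptic, the kernel bundle splits as $M_{K_D}\cong\bigl(\OO_D(E_D)^{\vee}\bigr)^{\oplus(2i+2)}$ with $E_D$ the $g^1_2$, and the two required $H^0$-vanishings reduce to explicit computations with line bundles on a hyperelliptic curve, exactly as in \cite[Lemma 3.5]{FK}; semicontinuity then gives the statement for the general Barth--Verra surface. Without this degeneration (or a substitute for it), your argument proves the reductions but not the proposition.
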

\begin{proof}
Using \cite[Lemma 2.2]{FK}, we can restrict the above Koszul cohomology groups to a general curve $D\in |H|$ to obtain isomorphisms:
$$K_{i,2}(X,-C,H)\cong K_{i,2}(D, -C_D, K_D) \ \mbox{ and } \ K_{i-2,3}(X, -C,H)\cong K_{i-2,3}(D, -C_D, K_D).$$
By taking exterior powers and then cohomology in the exact sequence (\ref{lazbun}), we obtain:
$$K_{i,2}(D, -C_D,K_D)\cong H^0\Bigl(D, \bigwedge ^i M_{K_D}\otimes (2K_D-C_D)\Bigr) \ \ \ \ \mbox{ and } $$
$$K_{i-2,3}(D,-C_D, K_D)\cong H^1\Bigl(D, \bigwedge^{i-1} M_{K_D}\otimes (2K_D-C_D)\Bigr)\cong H^0\Bigl(D, \bigwedge ^{i-1} M_{K_D}^{\vee}\otimes (C_D-K_D)\Bigr)^{\vee}.$$
By direct calculation we obtain the following slopes:
$$\mu\Bigl(\bigwedge^{i-1} M_{K_D}^{\vee}\otimes (C_D-K_D)\Bigr)=2i+2=g(D)-1, \ \ \mbox{ and }$$
$$\mu\Bigl(\bigwedge ^i M_{K_D}\otimes (2K_D-C_D)\Bigr)=2i<g(D)-1.$$
In particular, both these vector bundles are expected to have no non-trivial global sections. To check this fact for one particular Barth--Verra surface, one proceeds exactly like in
\S 3 of \cite{FK}\footnote{We take this opportunity to point out a typo in \cite[Corollary 3.4]{FK}: the conclusion should be $K_{j,2}(Z_g,H)=0$ for $j \leq p$.}. Namely, we specialize $X$ to a \emph{hyperelliptic} $K3$ surface $\widehat{X}$ with Picard lattice $\widehat{\Upsilon}_g$ having ordered basis $\{L, \eta, E\}$ and intersection form:
\[ \left( \begin{array}{ccc}
4i+8 & 0 & 2 \\
0 & -4  &0 \\
2 & 0 & 0
\end{array} \right).\]
Since $\Upsilon_g$ can be primitively embedded in $\widehat{\Upsilon}_g$, the surface $X$ can be specialized to $\widehat{X}$, where both curves $C$ and $D$ become hyperelliptic. In particular, $M_{K_D}$ splits as  $\bigl(\OO_D(E_D)^{\vee}\bigr)^{\oplus (2i+2)}$, where $E_D$ is the degree $2$ pencil on $D$, and one checks directly like in \cite[Lemma 3.5]{FK} that
$$ H^0\Bigl(D, \bigwedge ^{i-1} M_{K_D}^{\vee}\otimes (C_D-K_D)\Bigr)=0 \ \mbox{ and } \ H^0\Bigl(D, \bigwedge ^i M_{K_D}\otimes (2K_D-C_D)\Bigr)=0.$$
\end{proof}

\noindent \emph{Proof of Theorem \ref{pg}.} Propositions \ref{odd1} and \ref{odd2} complete the proof of Theorem \ref{pg} for $g\geq 7$. For $g=5$, the Prym--Green Conjecture amounts to the one single statement
$$K_{1,1}(C,K_C\otimes \eta)=0,$$
or equivalently, a general level paracanonical level $\ell$ curve $C\hookrightarrow \PP^3$ of genus $5$ lies on no quadric surfaces. Since such a quadric has rank at most $4$, this is equivalent to the statement that for a general $[C, \eta]\in \cR_{5,\ell}$, the torsion point $\eta$ cannot be written as the difference of two pencils from $W^1_4(C)$ . This is a simple exercise that can be solved via limit linear series, by degenerating $C$ to a curve of compact type $C'\cup E$, consisting of an  elliptic curve $E$ and a genus $4$ curve $C'$ .
\hfill $\Box$

\vskip 6pt

\subsection{The Prym--Green Conjecture in genus $g=2i+6$.}  Using a level $\ell$ curve  $[C, \eta_C]\in \cR_{g,\ell}$ lying either on a general Barth--Verra surface $X$ as above (when $\ell\geq \sqrt{\frac{g+2}{2}}$), or on a Nikulin surface when $\ell=2$ as in \cite{FK},  we are able to show that
$$K_{i+2,1}(C,K_C\otimes \eta_C)=0 \mbox{ and } \ K_{i-1,2}(C,K_C\otimes \eta)=0.$$ As pointed out in the introduction, this result is not quite optimal, for the Prym-Green Conjecture predicts the stronger vanishing $K_{i,2}(C,K_C\otimes \eta)=K_{i+1,1}(C,K_C\otimes \eta)=0$ (at least when $\ell\geq 3$). This result \emph{cannot} be achieved with the methods of this paper, see Remark \ref{failure}.

\vskip 4pt

\noindent \emph{Proof of Theorem \ref{pgeven}}.
Using (\ref{grseq}) the conclusion follows once we prove that
$$K_{i-1,2}(X,H)=0 \ \mbox{ and } \ K_{i+2,1}(X,H)=0,$$
as well as,
$$K_{i-2,3}(X,-C,H)=0 \ \mbox{ and } \ K_{i+1,2}(X,-C,H)=0.$$
These are the precise analogues in even genus of Propositions \ref{odd1} and \ref{odd2} respectively and the proofs are identical, so we skip the details.
\hfill $\Box$

\begin{remark}\label{failure}
Keeping the same notation as above, using (\ref{grseq}) we can write the exact sequence:
$$\cdots K_{i+1,1}(X,-C,H)\longrightarrow K_{i+1,1}(X,H)\longrightarrow K_{i+1,1}(C,K_C\otimes \eta_C)\longrightarrow \cdots.$$
Clearly $K_{i+1,1}(X,-C,H)=0$ whereas $K_{i+1,1}(X,H)\neq 0$, therefore $K_{i+1,1}(C,K_C\otimes \eta)\neq 0$ as well. In other words paracanonical curves of genus $2i+6$ on Nikulin or Barth--Verra surfaces \emph{do not} satisfy property $(N_i)$ predicted by the Prym--Green Conjecture.
\end{remark}

\end{document}